\documentclass{amsart}

\usepackage{amssymb,amsmath}
\usepackage[section]{algorithm}
\usepackage{algpseudocode}

\newcommand{\rst}[1]{\ensuremath{{\mathbin\mid}%
\raise-.4ex\hbox{\scriptsize$#1$}}}

\newtheorem{theorem}{Theorem}[section]
\newtheorem{proposition}[theorem]{Proposition}
\newtheorem{lemma}[theorem]{Lemma}

\newtheorem{definition}[theorem]{Definition}

\newtheorem{remark}[theorem]{Remark}

\newtheorem{example}[theorem]{Example}
\newenvironment{Example}{\begin{example}\em}{\end{example}}


\def\QQ{{\mathbb Q}}
\def\KK{{\mathbb K}}


\def\Ker{{\mathrm{Ker\,}}}
\def\Im{{\mathrm{Im\,}}}

\def\Tor{{\mathrm{Tor}}}

\def\Gr{Gr\"obner}

\def\lm{{\mathrm{lm}}}

\def\LM{{\mathrm{LM}}}

\def\bF{{\bar{F}}}
\def\bI{{\bar{I}}}
\def\bA{{\bar{A}}}

\def\bR{{\bar{R}}}
\def\bJ{{\bar{J}}}
\def\bS{{\bar{S}}}

\def\be{{\bar{e}}}
\def\bM{{\bar{M}}}
\def\bK{{\bar{K}}}
\def\bvarphi{{\bar{\varphi}}}

\def\e{{\epsilon}}
\def\bee{{\bar{\epsilon}}}

\def\bw{{\bar{w}}}

\def\FP{{\mathrm{FP}}}

\hyphenation{}
\hyphenation{}

\begin{document}

\title[Computing minimal free resolutions $\ldots$]
{Computing minimal free resolutions of right modules
over noncommutative algebras}

\author[R. La Scala]{Roberto La Scala$^*$}

\address{$^*$ Dipartimento di Matematica, Universit\`a di Bari, Via Orabona 4,
70125 Bari, Italy}
\email{roberto.lascala@uniba.it}

\thanks{Partially supported by Universit\`a di Bari}

\subjclass[2010] {Primary 16E05. Secondary 16Z05, 16E40}

\keywords{minimal free resolutions, homology of graded algebras,
letterplace correspondence}

\maketitle

\begin{abstract}
In this paper we propose a general method for computing a minimal free right
resolution of a finitely presented graded right module over a finitely presented
graded noncommutative algebra. In particular, if such module is the base field
of the algebra then one obtains its graded homology. The approach is based on
the possibility to obtain the resolution via the computation of syzygies for
modules over commutative algebras. The method behaves algorithmically if one
bounds the degree of the required elements in the resolution. Of course, this implies
a complete computation when the resolution is a finite one. Finally, for a monomial
right module over a monomial algebra we provide a bound for the degrees of the
non-zero Betti numbers of any single homological degree in terms of the maximal
degree of the monomial relations of the module and the algebra.
\end{abstract}



\section{Introduction}

A fundamental tool in the study of commutative and noncommutative
algebraic structures consists in developing some homology (or cohomology)
theory. In this context, the notion of free resolution, that is,
of an exact sequence of free modules over an algebra is an important one.
For instance, by means of the augmentation ideal of a graded algebra
one defines the minimal resolution of the base field which is used
to compute the graded homology of such algebra. Observe that
in the noncommutative case there are different notions of free resolution
depending on we consider one-sided or two-sided modules.
In fact, for many applications the free right (or left) resolutions are
the most effective ones and we will stick to this case in the present paper.
It is important also to mention that owing to non-Noetherianity of a general
(noncommutative finitely generated) algebra, even a finitely generated
module may have that the corresponding right syzygy module is infinitely
generated. There are of course important classes of algebras
which are right Noetherian (finite dimensional, universal enveloping
algebras, etc) or at least have finite homology. It was Anick \cite{An}
who proved that the algebras having a finite \Gr\ basis for their ideal
of relations hold finite Betti numbers for the minimal resolution
of the base field. He proved this by constructing the so-called Anick's
resolution which extends the combinatorial minimal resolution that was
introduced by Backelin \cite{Ba} for the case of monomial algebras.
One problem with the Anick's costruction is that it is generally a non-minimal
resolution and hence one needs some extra algorithmic work to minimalize it.
Beside this approach, many other ad hoc methods are used to compute
the homology (cohomology in case of group algebras) of specific classes
of algebras like modular group algebras, quiver algebras (path algebras),
G-algebras (PBW algebras, algebras of solvable type), etc. For an overview
about these methods, see for instance \cite{Gr,GS,LeS}.

In this paper we follow a different path aiming to describe a general
method for computing a minimal free right resolution of a finitely presented
graded right module over a finitely presented graded (noncommutative)
algebra by using syzygy computations for modules over commutative algebras.
In case the resolution is infinite with respect to the Betti numbers or its
length, the proposed method behaves algorithmically by bounding the degree
of the syzygies to be computed. The idea to use commutative modules has
its roots in a series of papers \cite{LSL1,LSL2,LS1} where commutative analogues
of noncommutative graded ideals has been introduced for the purpose
of obtaining noncommutative \Gr\ bases computations from commutative ones.
In those papers such correspondence between ideals is called the {\em letterplace
correspondence}. It is essentially based on the fact that a word
$w = x_{i_1}\cdots x_{i_d}$ can be represented as a set of commutative
monomials $\{x_{i_1s+1}\cdots x_{i_ds+d}\}$ ($s\geq 0$) where the index $s+k$
of the letterplace variable $x_{i_ks+k}$ corresponds to the place where
the letter $x_{i_k}$ may occur in a word containing $w$.
As a by-product one obtains, for instance, that the Hilbert function of
a noncommutative graded algebra $A$ can be obtained by computing the dimension
of suitable components of a corresponding commutative algebra. A brief
review of these ideas can be found in this paper in Section 4. The extension
of them to submodules of graded free right $A$-modules is developed
in Section 5 and 6. In Section 7 we analyze how such module letterplace
correspondence behaves with respect to homomorphisms between free modules.
This study culminates with Theorem \ref{kercorr2} which provides the new
method to obtain minimal free right resolutions. In view of this result
we propose in Section 3 a process, preserving syzygies, which allows
to reduce any grading of a free right module to the standard one where
all degrees of a free basis are equal to 1. The previous Section 2 acquaints
the reader to all noncommutative structures and basic notation that are used
in the paper. In Section 8 one finds the detailed description of the computation
of a finite minimal free right resolution of the base field of the
universal enveloping algebra of a free nilpotent Lie algebra. In Section 9
we explain how similar complete computations can be performed in an algorithmic way
by means of our method. In particular, for finitely presented monomial right modules
over finitely presented monomial algebras we propose a bound for the degrees
of the non-zero Betti numbers of any single homological degree.
In Section 10 we explain some tricks that can be used to speedup computer
calculations. By means of some of them, we propose also a comparison between
the timing obtained by an experimental application of our technique
to the example in Section 8 with the performance of an optimized library
for G-algebras. Finally, in Section 11 we present some conclusions and ideas
for further developments of the proposed approach.


\section{Right syzygy modules}

We start by introducing all noncommutative structures that we need in this paper.
Let $\KK$ be any field and denote by $F =  \KK\langle x_1,\ldots,x_n \rangle$
the free associative algebra which is freely generated by the finite set
$\{x_1,\ldots,x_n\}$. In other words, the elements of $F$ are noncommutative
polynomials in the variables $x_i$. We denote by $W\subset F$ the subset
of all monomials of $F$, that is, the elements of $W$ are words over the
alphabet $\{x_1,\ldots,x_n\}$. We endow $F$ with the {\em standard grading}
which defines $\deg(w)$ as the total degree of the monomial $w\in W$.
In other words, one has that $\deg(x_i) = 1$, for all variables $x_i$.
Denote by $F = \bigoplus_{d\geq 0} F_d$ the decomposition of the algebra $F$
in its homogeneous components. Let $I\subset F$ be a graded two-sided ideal,
that is, $I = \sum_d I_d$ where $I_d = I\cap F_d$ and consider the quotient
graded algebra $A = F/I$. Observe that if $A'$ is a graded algebra which is
finitely generated by the homogeneous set $\{x'_1,\ldots,x'_n\}$ where
$\deg(x'_i) = 1$ for all $i$ then $A'$ is clearly isomorphic to $A$
by the graded algebra homomorphism $\varphi:F\to A'$ such that $x_i\mapsto x'_i$
and $I = \Ker\varphi$. In other words, $I$ is the ideal of the relations
that are satisfied by the generators $x'_i$. By definition, the algebra $A'$
or equivalently $A$ is {\em finitely presented} when the ideal $I$ is finitely
generated. By abuse of notation, we denote also by $x_i$ the cosets $x_i + I$
which are the generators of the algebra $A$.

Let $\delta\geq 0$ be an integer and denote by $A[-\delta] =
\bigoplus_{d\geq 0} A[-\delta]_d$ the algebra $A$ which is endowed with the grading
induced by $\delta$, that is, we put $A[-\delta]_d = 0$, for any $d < \delta$
and $A[-\delta]_d = A_{d - \delta}$, for all $d\geq \delta$. Fix an integer $r > 0$
and some integers $\delta_i\geq 0$ ($1\leq i\leq r$). We consider the direct sum
$\bigoplus_{1\leq i\leq r} A[-\delta_i]$ which is a graded free right $A$-module
of finite rank $r$. If $\{e_1,\ldots,e_r\}$ is the canonical free basis of such
module then $\deg(e_i) = \delta_i$, for all $i$. A homogeneous element
of $\bigoplus_i A[-\delta_i]$ of degree $d$ is hence a right linear combination
$\sum_i e_i f_i$ ($f_i\in A$) where $\delta_i + \deg(f_i) = d$, for any $i$. 

\begin{definition}
Consider a right submodule $M\subset \bigoplus_{1\leq i\leq r} A[-\delta_i]$.
We call $M$ a {\em graded submodule} if $M = \sum_d M_d$ where $M_d =
M\cap \bigoplus_i A[-\delta_i]_d$. In this case, we define the quotient
graded right module $N = \bigoplus_{1\leq i\leq r} A[-\delta_i]/M$ where
the homogeneous component $N_d$ is isomorphic to $\bigoplus_i A[-\delta_i]_d/M_d$,
for all $d\geq 0$. A finitely generated graded right module
$N' = \langle g_1,\ldots,g_r \rangle$ where $g_i$ is a homogeneous
element of degree $\delta_i$ is clearly isomorphic to $N$ by the graded right
module homomorphism $\varphi:\bigoplus_i A[-\delta_i]\to N'$ such that
$e_i\mapsto g_i$ and $M = \Ker\varphi$. Since $M$ is the submodule of the relations
which are satisfied by the generators $g_i$ then we have that the module $N'$
or equivalently $N$ is {\em finitely presented} when $M$ is finitely generated.
\end{definition}

Remark that even if we assume that $A$ is finitely presented, this is generally
not a right Noetherian algebra which implies that a right submodule $M\subset A^r$
is usually infinitely generated. Of course, there are important cases when $A$
is right (or left) Noetherian as the case when $A$ is finite-dimensional
or it is the universal enveloping algebra of a finite-dimensional Lie algebra,
etc. (see \cite{MR}). For graded right modules one can define the notion
of minimal (homogeneous) basis and a noncommutative version of the Nakayama's
lemma holds (see, for instance, \cite{PP}). It makes sense then to define
the following invariants.

\begin{definition}
Let $A = F/I$ be a finitely generated graded algebra which is endowed with
the standard grading and consider a right submodule
$M\subset \bigoplus_{1\leq i\leq r} A[-\delta_i]$. Let $\{g_j\}$ be a
(possibly infinite) minimal basis of $M$. For all degrees $d\geq 0$,
since the homogeneous component $M_d$ is finite dimensional we can define
\[
b_d(M) = \#\{g_j\mid \deg(g_j) = d\}.
\]
The integers $\{b_d(M)\}_{d\geq 0}$ are called the {\em graded Betti numbers
of $M$}. In addition, consider a finitely generated graded right module
$N = \bigoplus_{1\leq i\leq r} A[-\delta_i]/M$. We define $b_d(N)$
as the number of generators of degree $d$ in a minimal basis of $N$
and we call again $\{b_d(N)\}_{d\geq 0}$ the {\em graded Betti numbers of $N$}.
\end{definition}

Assume now that $\{g_1,\ldots,g_s\}$ is a finite homogeneous basis of a finitely
generated graded right submodule $M\subset \bigoplus_{1\leq i\leq r} A[-\delta_i]$.
If $\Delta_j = \deg(g_j)$ then we consider the finitely generated graded free
right module $\bigoplus_{1\leq j\leq s} A[-\Delta_j]$. By denoting $\{\e_1,\ldots,\e_s\}$
its canonical basis we have therefore that $\deg(\e_j) = \Delta_j$.
It is natural to define the graded right module homomorphism
\[
\varphi:\bigoplus_{1\leq j\leq s} A[-\Delta_j]\to
\bigoplus_{1\leq i\leq r} A[-\delta_i]\,,\, \e_j\mapsto g_j.
\]
Its image is clearly $\Im\varphi = M$ and its kernel is by definition
\[
K = \Ker\varphi = \{\sum_j\e_j f_j\mid f_j\in A, \sum_j g_j f_j = 0\}.
\]
We call the elements of the graded right submodule $K\subset
\bigoplus_{1\leq j\leq s} A[-\Delta_j]$ the {\em right syzygies of the basis $\{g_j\}$}.
We refer also to $K$ as the {\em right syzygy module of $M$} (with respect to $\{g_j\}$).
Note that possible synonyms for the term ``right syzygies'' are {\em right module
relations} or {\em right linear relations}.

The main concern of the present paper is to develop a method to compute a homogeneous
basis of $K$. We have to remark immediately that the right syzygy module $K$
is not necessarily finitely generated again because $A$ is generally not a right
Noetherian algebra or at least a right coherent algebra \cite{Pi}.
Nevertheless, there are many cases when $K$ is finitely generated that we will
discuss in Section 9. For the moment, let us just observe that if $M,K$
are not finitely generated then one can at least study homogeneous syzygies of
homogeneous generators up to some fixed degree. A natural generalization
of the notion of right syzygy module is the following one.

\begin{definition}
Let $N = \bigoplus_{1\leq i\leq r} A[-\delta_i]/M$ be a finitely generated graded
right module. A {\em graded free right resolution of $N$} is by definition a
(possibly infinite) exact sequence of graded right module homomorphisms
\[
0 \leftarrow N\leftarrow \bigoplus_{1\leq i\leq r} A[-\delta_i] \leftarrow
\bigoplus_{1\leq j\leq s} A[-\delta'_j] \leftarrow
\bigoplus_{1\leq k\leq t} A[-\delta''_k] \leftarrow \ldots.
\]
Denote by $M_i$ the kernel of the $(i+1)$-th map of the above sequence (including
the first zero map). Note that $M_0 = N$ and $M_1 = M$. We call $M_i$ the
{\em $i$-th right syzygy module of $N$ (with respect to the resolution)}. We call
this sequence a {\em minimal resolution of $N$} if all the finite homogeneous bases
of $M_i$ which are the images of the canonical bases of the free modules are minimal
ones. In this case, we refer to the integer $b_d(M_i)$ as the {\em graded Betti
number of $N$ of homological degree $i$ and (internal) degree $d$}.
\end{definition}

It may happen that a resolution is finite because the second to last map
is an injective one. In this case, one says that $N$ admits a {\em finite free
right resolution}. For instance, the right modules over a G-algebra hold
such resolutions \cite{LeS}. Another interesting case is when there is a resolution
with exactly $m$ right syzygy modules (including $M_0 = N$) which are
finitely presented, that is, $M_m$ is finitely generated but infinitely
related. In this case the right module $N$ is called {\em of type $(\FP)_m$}.
Finally, if all resolutions have infinite length then $N$ is by definition
{\em of type $(\FP)_\infty$}. Observe that the modules over Noetherian
commutative algebras either have finite resolutions or belong to type $(\FP)_\infty$
(see, for instance, \cite{Pe}).

\section{Module component homogenization}

We introduce now a construction which provides that for the computation of the right
syzygies of a graded right submodule $M\subset\bigoplus_{1\leq i\leq r} A[-\delta_i]$
one is always reduced to the case of the standard grading, that is, $\delta_i = 0$
for any $i$ and hence $M\subset A^r$. This is in fact an essential step in view
of the method that we will propose in Section 7.

Let $t\notin F = \KK\langle x_1,\ldots,x_n \rangle$ be a new variable and consider 
the free associative algebra $\bF = \KK\langle x_1,\ldots,x_n, t \rangle$ endowed
with the standard grading. If $A = F/I$ where $I\subset F$ is a graded two-sided ideal
then we denote by $\bI$ the extension of $I$ in $\bF$, that is, $\bI\subset\bF$
is the graded two-sided ideal generated by $I$. Then, we define the quotient graded
algebra $\bA = \bF/\bI$. Since $I = \bI\cap F$, one has that $A$ can be canonically
embedded in $\bA$.

Fix now two integers $\delta\geq\delta'\geq 0$. We have an injective graded right
$A$-module homomorphism $\eta:A[-\delta]\to \bA[-\delta']$ which is defined as
$1\mapsto t^{\delta-\delta'}$. This map can be extended to graded right free modules
in the following way. Consider the integers $\delta_i\geq\delta'_i\geq 0$
($1\leq i\leq r$) and let $\{e_i\},\{\be_i\}$ be the canonical bases of the
graded free right modules $\bigoplus_{1\leq i\leq r} A[-\delta_i],
\bigoplus_{1\leq i\leq r} \bA[-\delta'_i]$ respectively. We define the injective
graded right $A$-module homomorphism
\[
\eta:\bigoplus_i A[-\delta_i]\to \bigoplus_i \bA[-\delta'_i]\,,\,
e_i\mapsto \be_i t^{\delta_i-\delta'_i}.
\]

\begin{definition}
Let $M\subset \bigoplus_i A[-\delta_i]$ be a graded right $A$-submodule.
Denote by $H(M)\subset\bigoplus_i \bA[-\delta'_i]$ the graded right $\bA$-submodule
generated by $\eta(M)$. We call $H(M)$ a {\em (partial) component homogenization of $M$}.
In particular, if $\delta'_i = 0$ for any $i$, that is, $\bigoplus_i \bA[-\delta'_i] = \bA^r$
with the standard grading then we say that $H(M)$ is the {\em complete component
homogenization of $M$}. On the other hand, if $\delta'_i = \delta_i$ for all $i$, that is,
$H(M)$ is the extension of an $A$-submodule of $\bigoplus_i A[-\delta_i]$ to a
$\bA$-submodule of $\bigoplus_i \bA[-\delta_i]$ then we call $H(M)$ the {\em trivial
component homogenization}.
\end{definition}

It is clear that $\eta(\bigoplus_i A[-\delta_i]) =
\bigoplus_i t^{\delta_i-\delta'_i} A[-\delta_i]$ which is a graded right $A$-submodule
of $\bigoplus_i \bA[-\delta'_i]$. This result can be generalized in the following way.

\begin{proposition}
Let $M\subset \bigoplus_i A[-\delta_i]$ be a graded right submodule and denote
$\bM = H(M)$ and $\bM'= \bM\cap \bigoplus_i t^{\delta_i-\delta'_i} A[-\delta_i]$.
Then, one has that $\eta(M) = \bM'$. In particular, we have that $\dim_\KK M_d =
\dim_\KK \bM'_d$, for all $d\geq 0$.
\end{proposition}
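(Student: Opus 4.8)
The plan is to prove the equality $\eta(M) = \bM'$ by establishing the two inclusions separately, the forward one being essentially immediate. Since $\eta(M)\subset\eta(\bigoplus_i A[-\delta_i]) = \bigoplus_i t^{\delta_i-\delta'_i}A[-\delta_i]$ and $\eta(M)\subset\bM$ (because $\bM = H(M)$ is generated over $\bA$ by $\eta(M)$), we immediately get $\eta(M)\subset\bM'$. The content of the proposition is the reverse inclusion $\bM'\subset\eta(M)$, together with the dimension statement, which will follow from $\eta(M) = \bM'$ and the injectivity of $\eta$ once we observe that $\eta$ is a graded map (it sends a homogeneous element of degree $d$ to one of degree $d$, since $\deg(\be_i t^{\delta_i-\delta'_i}) = \delta'_i + (\delta_i-\delta'_i) = \delta_i = \deg(e_i)$).

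For the reverse inclusion, first I would unwind what a general element of $\bM = H(M)$ looks like. By definition $\bM$ is the right $\bA$-submodule of $\bigoplus_i\bA[-\delta'_i]$ generated by the set $\eta(M)$, so an arbitrary element of $\bM$ has the form $\sum_k \eta(m_k)\bar a_k$ with $m_k\in M$ and $\bar a_k\in\bA$. Writing each $m_k = \sum_i e_i f_{k,i}$ with $f_{k,i}\in A$, we have $\eta(m_k) = \sum_i \be_i t^{\delta_i-\delta'_i} f_{k,i}$, so a general element of $\bM$ is $\sum_i \be_i\bigl(\sum_k t^{\delta_i-\delta'_i} f_{k,i}\bar a_k\bigr)$. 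The key structural point to extract is that the $i$-th coordinate of any element of $\bM$ lies in the right ideal $t^{\delta_i-\delta'_i}A + (\text{stuff involving }t)$ of $\bA$; more precisely I want to understand exactly which elements of $\bA$ arise as $\sum_k t^{\delta_i-\delta'_i} f_{k,i}\bar a_k$. Here is where the structure of $\bA = \bF/\bI$ matters: since $\bI$ is the extension of $I\subset F$, there is a direct-sum decomposition of $\bA$ by "number of $t$'s" — more precisely, $\bA$ is spanned by words in $x_1,\dots,x_n,t$ modulo $\bI$, and because $\bI$ is generated by relations involving only the $x_i$, the subspace of $\bA$ spanned by words with a prescribed pattern of $t$-occurrences behaves like a tensor construction over $A$. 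The clean way to phrase this: $\bA\cong A * \KK\langle t\rangle$ as graded algebras (free product), or at least $\bA$ has $A$ as a retract via the homomorphism $\bA\to A$ sending $t\mapsto 0$ (well-defined since $\bI$ is generated inside $F$, so this kills $\bI$).

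Using the retraction $\pi:\bA\to A$, $t\mapsto 0$, $x_i\mapsto x_i$, I would argue as follows. Suppose $\bar n\in\bM'$, so $\bar n = \sum_i\be_i h_i$ with $h_i\in t^{\delta_i-\delta'_i}A$, say $h_i = t^{\delta_i-\delta'_i}g_i$ with $g_i\in A$; I must show $\sum_i e_i g_i\in M$. Since $\bar n\in\bM$, write $\bar n = \sum_k\eta(m_k)\bar a_k$ as above. Now I would apply a variant of $\pi$ that strips exactly the leading block of $\delta_i-\delta'_i$ copies of $t$ from the $i$-th coordinate and then sends remaining $t$'s to $0$ — this is where the main obstacle lies, because one has to check this operation is well-defined on $\bM$ modulo $\bI$ and that it recovers $\sum_i e_i g_i$ on the left side while landing inside $M$ on the right side. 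The cleanest realization: apply $\pi$ coordinatewise to the equation $\sum_i\be_i t^{\delta_i-\delta'_i}g_i = \sum_k\eta(m_k)\bar a_k$ after first multiplying through, but more carefully, use that $\pi$ applied to $t^{\delta_i-\delta'_i}g_i$ is $0$ unless $\delta_i=\delta'_i$, so a direct application of $\pi$ loses information. Instead I would introduce the $A$-linear projection $p_i:\bA\to A$ picking out the coefficient of the monomial block $t^{\delta_i-\delta'_i}$ at the front (using the free-product/normal-form structure of $\bA$), show $\sum_i e_i\, p_i(h_i) = \sum_i e_i g_i$ trivially, and show $\sum_i e_i\, p_i((\eta(m_k)\bar a_k)_i)\in M$ by reducing to the case $\bar a_k\in\bA$ a single word: if $\bar a_k$ starts with some $x_j$ then $p_i$ of $t^{\delta_i-\delta'_i}f_{k,i}\bar a_k$ equals $f_{k,i}\cdot(\text{that word with the }t\text{-block stripped})$ which is $f_{k,i}$ times an element of $A$ — hence lies in the right $A$-submodule generated by $m_k$, i.e. in $M$; and if $\bar a_k$ starts with $t$ then the $t$-block at the front is longer than $\delta_i-\delta'_i$ and $p_i$ returns $0\in M$. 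Summing over $k$ gives $\sum_i e_i g_i\in M$, completing the reverse inclusion. The dimension equality is then immediate: $\eta$ restricts to a $\KK$-linear isomorphism $M_d\xrightarrow{\sim}\eta(M)_d = \bM'_d$ for every $d$ because $\eta$ is injective and degree-preserving.

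Let me also double-check the remaining routine point. For the forward inclusion I used that $\eta$ is $A$-linear and $\eta(m)\in\bM$ for $m\in M$ because $\bM$ is generated by $\eta(M)$; and $\eta(m)$ has $i$-th coordinate $t^{\delta_i-\delta'_i}f_i\in t^{\delta_i-\delta'_i}A$, so $\eta(m)\in\bigoplus_i t^{\delta_i-\delta'_i}A[-\delta_i]$, hence $\eta(m)\in\bM'$. That is all that is needed there.

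\medskip

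\noindent\textbf{Remark on where the difficulty concentrates.} The only genuinely non-formal step is justifying the coordinatewise projection $p_i:\bA\to A$ and its compatibility with $\bI$ — i.e. exploiting that $\bI = I\bF + \bF I$ (extension of $I$) so that $\bA$ admits the normal-form decomposition by $t$-patterns. Everything else is bookkeeping with $A$-linearity and gradings. If the free-product description $\bA\cong A*\KK\langle t\rangle$ has been (or can be) established, the argument above goes through verbatim; otherwise one should first record that small structural lemma about $\bA$.
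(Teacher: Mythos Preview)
Your overall strategy matches the paper's: both arguments establish the easy inclusion $\eta(M)\subset\bM'$ trivially and then, for the reverse inclusion, write an arbitrary $h'\in\bM'$ as $\sum_j \eta(g_j)f'_j$ with $g_j\in M$, $f'_j\in\bA$, and exploit the structure of $\bA$ over $A$ to replace the $f'_j$ by elements of $A$. The paper does this in one line (``we obtain clearly that $f'_j\in A$''), which is really shorthand for the same retraction idea you identified.

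However, your execution has a concrete error. You dismiss the retraction $\pi:\bA\to A$, $t\mapsto 0$, too quickly and then introduce the projections $p_i$ with a case analysis that fails: a word $\bar a_k$ may \emph{start} with some $x_j$ yet still contain $t$'s further along (for instance $\bar a_k = x_j t$). In that case $t^{\delta_i-\delta'_i} f_{k,i}\bar a_k$ is not of the form $t^{\delta_i-\delta'_i}\cdot(\text{element of }A)$, so your $p_i$ must return $0$, not ``$f_{k,i}$ times an element of $A$'' as you claim. Your two cases (``starts with $x_j$'' versus ``starts with $t$'') do not exhaust the possibilities in the way you need.

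The clean fix --- and what the paper is implicitly doing --- is to use $\pi$ after all, but in the right order. From $h'_i = t^{\delta_i-\delta'_i}\sum_j g_{ij}f'_j \in t^{\delta_i-\delta'_i}A$ and the injectivity of left multiplication by $t$ in $\bA\cong A*\KK\langle t\rangle$, you first cancel the prefix to obtain $\sum_j g_{ij}f'_j\in A$. \emph{Now} apply $\pi$ to this equation: since $g_{ij}\in A$ and $\pi$ is a ring homomorphism fixing $A$, you get $\sum_j g_{ij}f'_j = \sum_j g_{ij}\pi(f'_j)$ with $\pi(f'_j)\in A$. Hence $h' = \eta\bigl(\sum_j g_j\,\pi(f'_j)\bigr)\in\eta(M)$. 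This is both simpler than your $p_i$ construction and avoids the faulty case split; your remark that the free-product structure of $\bA$ is the only non-formal input is exactly right.
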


\begin{proof}
Since $\eta(\bigoplus_i A[-\delta_i]) = \bigoplus_i t^{\delta_i-\delta'_i} A[-\delta_i]$,
it is clear that $\eta(M)\subset \bM'$. Moreover, any element $h'\in\bM$ is such that
$h' = \sum_j g'_j f'_j$ where $g'_j = \eta(g_j), g_j\in M$ and $f'_j\in\bA$. In other words,
if $h' = \sum_i \be_i h'_i$ ($h'_i\in\bA$) and $g_j = \sum_i e_i g_{ij}$ ($g_{ij}\in A$)
then $h'_i = t^{\delta_i-\delta'_i} \sum_j g_{ij} f'_j$. By assuming that $h'\in \bM'$,
that is, $h'_i\in t^{\delta_i-\delta'_i} A$ we obtain clearly that $f'_j\in A$ and
therefore $h' = \eta(h)$ where $h = \sum_j g_j f'_j\in M$.
\end{proof}

Since $\eta$ is an injective map, note that the above result implies that the mapping
$M\mapsto H(M)$ is also an embedding of the graded right $A$-submodules
of $\bigoplus_i A[-\delta_i]$ into the graded right $\bA$-submodules of
$\bigoplus_i \bA[-\delta'_i]$. It is clear that the component homogenizations
are exactly those graded right $\bA$-submodules $\bM\subset\bigoplus_i \bA[-\delta'_i]$
that are generated by $\bM' = \bM\cap\bigoplus_i t^{\delta_i-\delta'_i} A[-\delta_i]$.
Moreover, since $\eta$ is an injective graded right $A$-module homomorphism, we have that
$M = \eta^{-1}(\bM')\subset\bigoplus_i A[-\delta_i]$ is the unique graded right
$A$-submodule such that $H(M) = \bM$. We want now to study how the correspondence
$M\mapsto H(M)$ behaves with respect to minimal generating sets.

\begin{theorem}
\label{bascorr}
Consider a graded right submodule $M\subset \bigoplus_i A[-\delta_i]$ and its
component homogenization $\bM = H(M)\subset \bigoplus_i \bA[-\delta'_i]$. Moreover,
let $\{g_j\}$ be a set of homogeneous elements of $M$ and define $g'_j = \eta(g_j)$,
for all $j$. Then, we have that $\{g_j\}$ is a (minimal) basis of $M$
if and only if $\{g'_j\}$ is a (minimal) basis of $\bM$. In particular, one has
that $b_d(M) = b_d(\bM)$, for all $d\geq 0$.
\end{theorem}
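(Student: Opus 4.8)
The plan is to reduce the whole statement to two facts already at our disposal: first, the correspondence $M\mapsto H(M)$ is an \emph{injective}, inclusion-preserving map from the graded right $A$-submodules of $\bigoplus_i A[-\delta_i]$ to the graded right $\bA$-submodules of $\bigoplus_i \bA[-\delta'_i]$ (this was observed just before the statement, $M=\eta^{-1}(\bM')$ being the unique preimage of $\bM$); second, $\eta$ is degree-preserving. The latter I would record explicitly: a homogeneous element of $\bigoplus_i A[-\delta_i]$ of degree $d$ is $\sum_i e_i g_{ij}$ with $\delta_i+\deg(g_{ij})=d$, and its image $\eta(\sum_i e_i g_{ij})=\sum_i \be_i t^{\delta_i-\delta'_i} g_{ij}$ has $i$-th component of degree $\delta'_i+(\delta_i-\delta'_i)+\deg(g_{ij})=d$; hence $\deg(g'_j)=\deg(g_j)$ for all $j$.

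Next I would prove a localized version of the generating-set claim: for every subset $T$ of the index set,
\[
H\bigl(\langle g_j\mid j\in T\rangle_A\bigr)=\langle g'_j\mid j\in T\rangle_{\bA}.
\]
Writing $M_T=\sum_{j\in T} g_j A$, the $A$-linearity of $\eta$ gives $\eta(M_T)=\sum_{j\in T} g'_j A$, and the $\bA$-submodule generated by $\eta(M_T)$ is the same as the one generated by $\{g'_j\mid j\in T\}$ since $A\subset\bA$; this is exactly the displayed identity. Because each $g_j$ is homogeneous, $M_T$ is a graded submodule, so the injectivity of $M\mapsto H(M)$ yields, for every $T$,
\[
M_T=M\ \Longleftrightarrow\ H(M_T)=H(M)=\bM\ \Longleftrightarrow\ \langle g'_j\mid j\in T\rangle_{\bA}=\bM.
\]
Taking $T$ to be the full index set shows that $\{g_j\}$ is a basis of $M$ exactly when $\{g'_j\}$ is a basis of $\bM$.

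For the minimality part I would use that a (possibly infinite) homogeneous generating set of a graded module over $A$ or over $\bA$ is minimal precisely when no proper subset of it is again a generating set — this follows from the graded Nakayama lemma invoked in Section 2, and it is also what makes $b_d$ independent of the chosen minimal basis. By the equivalence boxed above, $\{g_j\}$ has a proper generating subset indexed by some $T$ if and only if the corresponding $\{g'_j\mid j\in T\}$ generates $\bM$; combined with the equivalence of the two generating conditions, this gives that $\{g_j\}$ is a minimal basis of $M$ if and only if $\{g'_j\}$ is a minimal basis of $\bM$. Finally, since $\eta$ preserves degrees, such a pair of minimal bases has the same number of elements in each degree $d$, which is $b_d(M)=b_d(\bM)$.

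I do not expect a genuine obstacle: the only points needing care are the grading bookkeeping — so that each $M_T$ is truly a graded submodule and $\eta$ is degree-$0$ — and the clean invocation of the injectivity of $M\mapsto H(M)$ together with the Nakayama characterization of minimality; once these are in place the argument is formal.
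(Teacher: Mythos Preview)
Your argument is correct and takes a somewhat different route from the paper. The paper proves the forward implication for generating sets directly, and then handles minimality by an explicit component-wise computation: assuming $g'_1=\sum_{j>1}g'_jf'_j$ with $f'_j\in\bA$, it writes out the $i$-th component as $t^{\delta_i-\delta'_i}g_{i1}=\sum_{j>1}t^{\delta_i-\delta'_i}g_{ij}f'_j$ and argues from this that the $f'_j$ already lie in $A$, so the relation descends to $g_1=\sum_{j>1}g_jf'_j$. You instead package everything into the single identity $H(\langle g_j\mid j\in T\rangle_A)=\langle g'_j\mid j\in T\rangle_{\bA}$ and then invoke the injectivity of $M\mapsto H(M)$ (which the paper established just before the theorem via Proposition~3.2) to get, for every $T$, the equivalence $\langle g_j\mid j\in T\rangle_A=M \Leftrightarrow \langle g'_j\mid j\in T\rangle_{\bA}=\bM$; generation and minimality then drop out formally.

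The trade-off: your approach is cleaner and symmetric in the two directions, and it isolates exactly where the work is done (namely in the proof of Proposition~3.2, which is the real reason the $f'_j$ can be taken in $A$). The paper's approach, by redoing that calculation at the level of a single linear dependence, keeps the role of the extra variable $t$ visible and does not need to name the Nakayama characterization of minimality explicitly, though of course both proofs rely on it through the very definition of a minimal basis.
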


\begin{proof}
Assume that $\{g_j\}$ is a homogeneous basis of $M$. Since $\eta$ is a graded $A$-module
homomorphism, one has that $\{g'_j\}$ is a homogeneous basis of the graded $A$-submodule
$\bM' = \bM\cap \bigoplus_i t^{\delta_i-\delta'_i} A[-\delta_i]$. Because the graded
$\bA$-submodule $\bM$ is generated by $\bM'$ we obtain immediately that $\{g'_j\}$
is a homogeneous basis of $\bM$.

Suppose now that $g'_1 = \sum_{j>1} g'_j f'_j$ where $f'_j\in\bA$, that is, $\{g'_j\}$
is not a minimal basis. If $g_j = \sum_i e_i g_{ij}$ ($g_{ij}\in A$) then one has that
$t^{\delta_i-\delta'_i} g_{i1} = \sum_{j>1} t^{\delta_i-\delta'_i} g_{ij} f'_j$.
We conclude that $f'_j\in A$ and $g_1 = \sum_{j>1} g_j f'_j$. Finally, it is clear that
by similar arguments one obtains also the necessary condition in the statement.
\end{proof}

Note explicitly that the above result provides a method to obtain a minimal
basis $\{g_j\}$ of the graded submodule $M$ starting from a minimal basis $\{g'_j\}$
of its component homogenization $\bM = H(M)$. In fact, since $\bM$ is generated by
$\bM' = \eta(M)$, we can assume that $\{g'_j\}\subset\bM'$ and hence $g_j = \eta^{-1}(g'_j)$,
for all $j$. In Section 2 we have already observed that we cannot always assume
that these bases are finite sets but in what follows we will make this assumption.

Let $\{g_j\}$ be a finite homogeneous basis of a finitely generated graded right 
$A$-submodule $M\subset \bigoplus_i A[-\delta_i]$. Then, denote $\Delta_j = \deg(g_j)$
and consider the finitely generated graded free right $A$-module $\bigoplus_j A[-\Delta_j]$.
If $\{\e_j\}$ is its canonical basis then by definition we have that $\deg(\e_j) = \Delta_j$,
for any $j$. In a similar way, one defines the finitely generated graded free right
$\bA$-module $\bigoplus_j \bA[-\Delta_j]$ with canonical basis $\{\bee_j\}$ such that
$\deg(\bee_j) = \Delta_j$, for all $j$. One has therefore the graded right $A$-module
homomorphism
\[
\varphi:\bigoplus_j A[-\Delta_j]\to \bigoplus_i A[-\delta_i]\,,\, \e_j\mapsto g_j.
\]
By putting $g'_j = \eta(g_j)$, we consider also the graded right $\bA$-module homomorphism
 \[
H(\varphi):\bigoplus_j \bA[-\Delta_j]\to \bigoplus_i \bA[-\delta'_i]\,,\, \bee_j\mapsto g'_j.
\]
Observe that by Theorem \ref{bascorr} we have that $\Im H(\varphi) = H(\Im\varphi) =
H(M)$. We want now to understand how the correspondence $\varphi\mapsto H(\varphi)$ behaves
with respect to the kernels. By denoting $\bvarphi = H(\varphi)$, we have by definition
\begin{equation*}
\begin{gathered}
K = \Ker\varphi = \{\sum_j\e_j f_j\mid f_j\in A, \sum_j g_j f_j = 0\}, \\
\bK = \Ker\bvarphi = \{\sum_j\bee_j f'_j\mid f'_j\in\bA, \sum_j g'_j f'_j = 0\}.
\end{gathered}
\end{equation*}
In other words, the graded right $A$-submodule $K\subset \bigoplus_j A[-\Delta_j]$
is the right syzygy module of the basis $\{g_j\}$ and the graded right $\bA$-submodule
$\bK\subset \bigoplus_j \bA[-\Delta_j]$ is the right syzygy module of $\{g'_j\}$.

Consider now the canonical embedding $\bigoplus_j A[-\Delta_j]\to \bigoplus_j \bA[-\Delta_j]$
such that $\e_j\mapsto\bee_j$. By abuse of notation, we denote this injective graded right
$A$-module homomorphism also by $\eta$ since it defines the trivial component homogenization
$H(K)\subset \bigoplus_j \bA[-\Delta_j]$ of the submodule $K\subset \bigoplus_j A[-\Delta_j]$.
In fact, we will prove that $\bK = H(K)$, that is, $\Ker H(\varphi) = H(\Ker\varphi)$.
We start by considering the following key property.

\begin{proposition}
\label{commdia}
The following diagram is a commutative one
\[
\begin{array}{ccc}
\bigoplus_j A[-\Delta_j] & {\buildrel \varphi\over\longrightarrow} & 
\bigoplus_i A[-\delta_i] \\
\eta \downarrow & & \hspace{9pt} \downarrow \eta \\
\bigoplus_j \bA[-\Delta_j] & {\buildrel H(\varphi)\over\longrightarrow} & 
\bigoplus_i \bA[-\delta'_i] \\
\end{array}
\]
\end{proposition}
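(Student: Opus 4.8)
The plan is to verify the identity $\eta\circ\varphi = H(\varphi)\circ\eta$ of maps $\bigoplus_j A[-\Delta_j]\to\bigoplus_i \bA[-\delta'_i]$ by evaluating both sides on the canonical free basis $\{\e_j\}$ of $\bigoplus_j A[-\Delta_j]$. First I would observe that all four maps in the square are homomorphisms of graded right $A$-modules: the horizontal map $\varphi$ is right $A$-linear by construction, the horizontal map $H(\varphi)$ is right $\bA$-linear and hence right $A$-linear through the canonical embedding $A\subset\bA$, and the two vertical maps denoted $\eta$ are right $A$-module homomorphisms by their definitions (the left one being the trivial component homogenization $\e_j\mapsto\bee_j$, the right one being $e_i\mapsto\be_i t^{\delta_i-\delta'_i}$). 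Therefore both composites $\eta\circ\varphi$ and $H(\varphi)\circ\eta$ are homomorphisms of graded right $A$-modules, and since $\bigoplus_j A[-\Delta_j]$ is free on $\{\e_j\}$ as a right $A$-module, it is enough to check that the two composites agree on each $\e_j$.

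Next I would compute the two paths around the square. Going right and then down, $\varphi(\e_j) = g_j$, and applying the right vertical $\eta$ gives $\eta(g_j) = g'_j$ directly from the convention $g'_j := \eta(g_j)$ fixed just before the statement; spelling it out, if $g_j = \sum_i e_i g_{ij}$ with $g_{ij}\in A$, then right $A$-linearity of $\eta$ gives $\eta(g_j) = \sum_i \be_i\, t^{\delta_i-\delta'_i} g_{ij}$, the products $t^{\delta_i-\delta'_i}g_{ij}$ being taken in $\bA$. Going down and then right, the left vertical $\eta$ sends $\e_j\mapsto\bee_j$, and then $H(\varphi)(\bee_j) = g'_j$ by the definition of $H(\varphi)$. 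Hence both composites send $\e_j$ to $g'_j$ for every $j$, and the square commutes.

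Since the whole argument is a direct unwinding of the definitions, I do not expect a genuine obstacle. The only point that needs a moment's attention is ensuring that the maps involved are right $A$-linear rather than merely additive, so that coincidence on the free generators $\{\e_j\}$ suffices: concretely, that $H(\varphi)$, a priori only an $\bA$-module homomorphism, is legitimately viewed as an $A$-module homomorphism via $A\subset\bA$, and that the displayed formula for $\eta$ on $\bigoplus_i A[-\delta_i]$ is exactly the right $A$-linear extension of $e_i\mapsto\be_i t^{\delta_i-\delta'_i}$ (no commutativity of $t$ with the $x_i$ in $\bF$ being assumed or used anywhere).
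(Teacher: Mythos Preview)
Your proof is correct and follows essentially the same approach as the paper: the paper computes both composites on an arbitrary element $h=\sum_j \e_j f_j$, using right $A$-linearity of $\eta$ and right $\bA$-linearity of $H(\varphi)$ to reduce to $\sum_j g'_j f_j$ on each side, while you make the $A$-linearity explicit first and then check only on the generators $\e_j$. The two arguments are the same up to presentation.
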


\begin{proof}
Put $\bvarphi = H(\varphi)$. For all $h = \sum_j \e_j f_j\in \bigoplus_j A[-\Delta_j]$,
we have to prove that $\eta(\varphi(h)) = \bvarphi(\eta(h))$. In fact, one has that
$\eta(\varphi(h)) = \eta(\sum_j g_j f_j) = \sum_j \eta(g_j) f_j = \sum_j g'_j f_j$.
On the other hand, it holds that $\bvarphi(\eta(h)) = \bvarphi(\sum_j \bee_j f_j) =
\sum_j g'_j f_j$.
\end{proof}

\begin{theorem}
\label{kercorr}
One has that $\bK = H(K)$, that is, $\eta(K) = \bK' = \bK\cap \bigoplus_j A[-\Delta_j]$.
\end{theorem}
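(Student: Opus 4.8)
The plan is to prove the two inclusions $H(K)\subseteq\bK$ and $\bK\subseteq H(K)$ separately, the first being routine and the second being the real content. For the easy inclusion, take any syzygy $h=\sum_j\e_j f_j\in K$, so $\varphi(h)=\sum_j g_j f_j=0$. Applying the commutative diagram of Proposition \ref{commdia} gives $H(\varphi)(\eta(h))=\eta(\varphi(h))=\eta(0)=0$, hence $\eta(h)\in\bK$. Since $\bK$ is a graded right $\bA$-submodule and $H(K)$ is by definition the $\bA$-submodule generated by $\eta(K)$, we get $H(K)\subseteq\bK$ at once; equivalently $\eta(K)\subseteq\bK'=\bK\cap\bigoplus_j A[-\Delta_j]$.

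For the reverse inclusion it suffices, by the remarks following Proposition \ref{commdia} (the component homogenizations are exactly the $\bA$-submodules generated by their intersection with $\bigoplus_j t^{\cdot}A$, and here the relevant grading shift is trivial so that submodule is $\eta(\bigoplus_j A[-\Delta_j])$), to show that every element of $\bK$ that already lies in $\eta(\bigoplus_j A[-\Delta_j])$ comes from $K$; that is, $\bK'\subseteq\eta(K)$. So suppose $\bar h=\sum_j\bee_j f_j$ with $f_j\in A$ and $H(\varphi)(\bar h)=\sum_j g'_j f_j=0$ in $\bigoplus_i\bA[-\delta'_i]$. Writing $g'_j=\eta(g_j)=\sum_i\bee_i t^{\delta_i-\delta'_i}g_{ij}$ where $g_j=\sum_i e_i g_{ij}$, the $i$-th coordinate of $H(\varphi)(\bar h)$ is $t^{\delta_i-\delta'_i}\sum_j g_{ij}f_j\in\bA$, and this is zero for every $i$. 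The key step is then to conclude that $\sum_j g_{ij}f_j=0$ \emph{in $A$} for each $i$. This follows because left multiplication by $t^{\delta_i-\delta'_i}$ is injective on $\bA$: indeed $t$ is a nonzerodivisor in $\bA=\bF/\bI$ since $\bI=I\bF$ is generated by elements of $F$ not involving $t$ (more precisely, $A$ embeds in $\bA$ and $\bA$ is free as a left $A[t]$-ish object in the relevant degrees — one argues via the monomial basis: words in $\bA$ are words over $\{x_1,\dots,x_n,t\}$ modulo $\bI$, and $t^k w'=t^k w''$ with $w',w''$ words forces $w'=w''$). Granting the injectivity, $\sum_j g_{ij}f_j=0$ in $A$ for all $i$, i.e.\ $\varphi(\sum_j\e_j f_j)=0$, so $h=\sum_j\e_j f_j\in K$ and $\bar h=\eta(h)\in\eta(K)$.

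Combining the two inclusions yields $\bK=H(K)$, and the parenthetical reformulation $\eta(K)=\bK'=\bK\cap\bigoplus_j A[-\Delta_j]$ is then immediate from Proposition \ref{bascorr}'s description of component homogenizations applied to the trivial homogenization (where $\delta_j-\delta'_j=0$).

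The main obstacle is the injectivity of multiplication by $t^{\delta_i-\delta'_i}$ on $\bA$ — i.e.\ that $t$ is a left nonzerodivisor in $\bA$. This needs a clean justification from the structure of $\bI$ as the extension of $I$. I expect the cleanest route is to note that $\bF\cong F*\KK\langle t\rangle$ as a free product and that $\bI$ is generated by $I\subset F$, so that $\bA\cong A*\KK\langle t\rangle$; in such a free product a monomial of the form $t^k a$ ($a\in A$) vanishes only if $a=0$, which is exactly what is needed. Alternatively, one can avoid invoking free products and argue directly with the letterplace/monomial description already set up in the paper, or simply observe that the homogeneous component bookkeeping in Proposition \ref{commdia} and the earlier Proposition on $\eta(M)=\bM'$ already encapsulates this fact. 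Everything else is formal diagram-chasing.
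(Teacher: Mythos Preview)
Your argument for the inclusion $H(K)\subseteq\bK$ is fine, and your proof that $\bK'\subseteq\eta(K)$ (every syzygy with coefficients already in $A$ comes from $K$, via the nonzerodivisor property of $t$) is correct. The free-product description $\bA\cong A*\KK\langle t\rangle$ is indeed the right way to see that $t$ is a left nonzerodivisor.

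But there is a genuine logical gap in the reverse inclusion. You claim that ``it suffices \ldots\ to show $\bK'\subseteq\eta(K)$'', invoking the characterization of component homogenizations. That characterization says a $\bA$-submodule $\bM$ is a component homogenization \emph{if and only if} $\bM$ is generated by $\bM'=\bM\cap\bigoplus_j A[-\Delta_j]$. You cannot use this to conclude $\bK=H(K)$ unless you already know that $\bK$ is generated by $\bK'$, which is exactly the point at issue. From $\bK'=\eta(K)$ you only get that $H(K)$ equals the $\bA$-submodule generated by $\bK'$, and this is a priori merely contained in $\bK$. Your final paragraph says ``everything else is formal diagram-chasing'', but this step is not formal.

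The paper closes the gap by taking an arbitrary element $h'=\sum_j\bee_j f'_j$ of a minimal basis of $\bK$ with $f'_j\in\bA$ (not assumed in $A$), first deducing $\sum_j g_{ij}f'_j=0$ from the nonzerodivisor property, and then using minimality to force $f'_j\in A$. The underlying mechanism is the very free-product structure you cite: since $\bA$ is free as a left $A$-module on a basis $\{v\}$ of alternating words (with $v=1$ or $v$ starting with $t$), one can write $f'_j=\sum_v a_{jv}v$ with $a_{jv}\in A$; then $\sum_j g_{ij}a_{jv}=0$ for every $i,v$, so each $h'_v=\sum_j\bee_j a_{jv}$ lies in $\eta(K)$ and $h'=\sum_v h'_v\cdot v\in H(K)$. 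This decomposition, applied to a general $h'\in\bK$ rather than only to elements of $\bK'$, is the missing ingredient in your argument.
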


\begin{proof}
By Proposition \ref{commdia}, we have immediately that $\eta(K)\subset \bK$
and therefore $H(K)\subset \bK$. It is sufficient to prove that $\bK$ has
a generating set which is contained in $\eta(K)$. Consider $h' = \sum_j \bee_j f'_j\in\bK$
any element of a minimal basis of $\bK$ where $f'_j\in \bA_{d-\Delta_j}$, for some $d\geq 0$
and for all $j$. If $g_j = \sum_i e_i g_{ij}$ where $g_{ij}\in A_{\Delta_j-\delta_i}$
then $g'_j = \sum_i \be_i t^{\delta_i} g_{ij}$ and hence in the algebra $\bA$
we have that $t^{\delta_i} \sum_j g_{ij} f'_j = 0$ for all $i$. Recall now 
the graded algebras $A = F/I$ and $\bA = \bF/\bI$ are defined by the graded two-sided
ideal $I\subset F$ and $\bI\subset\bF$ where $\bI$ is generated by $I = \bI\cap F$.
Since $g_{ij}\in A$, we obtain therefore that $\sum_j g_{ij} f'_j = 0$ for any $i$.
Moreover, because $h'$ belongs to a minimal basis one has also that $f'_j\in A$
for all $j$, that is, $h' = \eta(h)$ where $h = \sum_j \e_j f'_j\in K$.
\end{proof}

Observe explicitly that Theorem \ref{bascorr} together with Theorem \ref{kercorr}
provides a method for obtaining the couple of graded Betti numbers sets $\{b_d(M)\}$
and $\{b_d(K)\}$ of a graded right submodule $M\subset \bigoplus_i A[-\delta_i]$
by its component homogenization $H(M)\subset \bigoplus_i \bA[-\delta'_i]$.
In the next sections we will show that this is essential to obtain these data
via commutative analogues of such structures.


\section{Letterplace ideals}

In this section we show that graded two-sided ideals and hence noncommutative
graded algebras have useful commutative counterparts. The results of this section
has been introduced \cite{LSL1,LS1} but we recall them here for the sake of completeness.

Consider the polynomial algebra $P = \KK[x_{ij}\mid 1\leq i\leq n, j\geq 1]$ 
in the infinite set of commutative variables $\{x_{ij}\}$. We assume that $P$
is endowed with the standard grading, that is, $\deg(x_{ij}) = 1$, for all $i,j$.
Then, we define the monomial ideal
\[
Q = \langle x_{ij}x_{kj}\mid 1\leq i,k\leq n, j\geq 1 \rangle\subset P
\]
and we consider the quotient graded algebra $R = P/Q$. A $\KK$-linear basis of the
algebra $R$ is given by (the cosets of) the monomials $x_{i_1j_1}\cdots x_{i_dj_d}$
($d\geq 0$) where $1\leq i_1,\ldots,i_d\leq n$ and $1\leq j_1 < \ldots < j_d$. 
If, as usual, $F = \KK\langle x_1,\ldots,x_n \rangle$ then a graded $\KK$-linear
embedding $\iota:F\to R$ is defined as
\[
x_{i_1}\cdots x_{i_d}\mapsto x_{i_11}\cdots x_{i_dd}
\]
where $x_{i_1}\cdots x_{i_d}$ is any monomial of $W$. For any degree $d\geq 0$,
the image $\iota(F_d)\subset R_d$ can be described in the following way.
Consider the finitely generated graded subalgebra
$P(d) = \KK[x_{ij}\mid 1\leq i\leq n, 1\leq j\leq d]\subset P$ and denote
$Q(d) = Q\cap P(d)$. Then, we define the quotient graded algebra $R(d) = P(d)/Q(d)$
which can be canonically embedded in $R$. One has immediately that
$\iota(F_d) = R(d)_d$.

Consider now $\sigma:R\to R$ the injective algebra endomorphism that is defined as
$x_{ij}\mapsto x_{ij+1}$, for all $i,j$. For the mapping $\iota$ one has the following
key property. For all $g\in F_d$ and $f\in F$, we have that
\[
\iota(g f) = \iota(g) \sigma^d(\iota(f)).
\]

\begin{definition}
Let $J$ be an ideal of $R$. If $\sigma(J)\subset J$ then we call $J$ a {\em $\sigma$-invariant
ideal}. In this case, if there is a subset $G\subset J$ such that $J$ is generated by
$\bigcup_{k\geq 0} \sigma^k(G)$ as an ideal of $R$ then we say that $J$ is
{\em $\sigma$-generated by $G$}.
\end{definition}

\begin{definition}
Let $I\subset F$ be a graded two-sided ideal. We define $L(I)\subset R$ as the graded
$\sigma$-invariant ideal which is $\sigma$-generated by $\iota(I)$. We call $L(I)$
the {\em letterplace analogue of $I$}.
\end{definition}

It is clear that $L(F) = R$ and we have that $\iota(F_d) = R(d)_d$, for all $d\geq 0$.
This property can be extended to any graded two-sided ideal of $F$ in the following way.

\begin{proposition}
Let $I\subset F$ be a graded two-sided ideal and denote $J = L(I)$. For any degree $d\geq 0$,
one has that $\iota(I_d) = J(d)_d$ where $J(d) = J\cap R(d)$. In particular, we obtain that
$\dim_\KK I_d = \dim_\KK J(d)_d$.
\end{proposition}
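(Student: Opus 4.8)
The plan is to leverage the defining property $\iota(gf) = \iota(g)\,\sigma^d(\iota(f))$ for $g \in F_d$, $f \in F$, together with the fact that $\{x_{i_1j_1}\cdots x_{i_dj_d} : j_1 < \cdots < j_d\}$ is a $\KK$-basis of $R$, in order to decompose elements of $J = L(I)$ by "place support". First I would establish the inclusion $\iota(I_d) \subseteq J(d)_d$: since $I_d \subseteq I$, clearly $\iota(I_d) \subseteq L(I) = J$; and since $\iota(F_d) = R(d)_d$ we get $\iota(I_d) \subseteq R(d)_d$, hence $\iota(I_d) \subseteq J \cap R(d)_d = J(d)_d$. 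The substance is the reverse inclusion $J(d)_d \subseteq \iota(I_d)$.

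For the reverse inclusion I would argue as follows. By definition, $J$ is $\sigma$-generated by $\iota(I)$, so every element of $J$ is a finite sum $\sum_\alpha r_\alpha\, \sigma^{k_\alpha}(\iota(h_\alpha))\, r'_\alpha$ with $r_\alpha, r'_\alpha \in R$, $h_\alpha \in I$ homogeneous, $k_\alpha \geq 0$. Because $R$ is commutative we can write such an element more simply as $\sum_\alpha \sigma^{k_\alpha}(\iota(h_\alpha))\, m_\alpha$ with $m_\alpha \in R$ monomials (after expanding everything into the monomial basis of $R$). Now take $u \in J(d)_d$, so $u \in R(d)_d$, meaning $u$ is a $\KK$-combination of monomials $x_{i_11}\cdots x_{i_dd}$ using exactly the places $1,\dots,d$ — i.e. $u$ lies in the image $\iota(F_d)$. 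Writing $u = \sum_\alpha \sigma^{k_\alpha}(\iota(h_\alpha))\, m_\alpha$ and expanding into monomials, a monomial $\sigma^{k}(\iota(h))\, m$ with $h \in I_e$ has the form $(x_{i_1,k+1}\cdots x_{i_e,k+e})\cdot m$; for this to have place-support exactly $\{1,\dots,d\}$ forces $k = 0$ on any surviving term whose leading block starts at place $1$, and then the complementary factor $m$ must supply precisely the places $\{e+1,\dots,d\}$ and hence equals $\sigma^e(\iota(f))$ for a suitable $f \in F_{d-e}$. Collecting terms, after cancellation $u = \sum_\beta \iota(h_\beta)\,\sigma^{e_\beta}(\iota(f_\beta)) = \sum_\beta \iota(h_\beta f_\beta) = \iota\!\left(\sum_\beta h_\beta f_\beta\right)$, and $\sum_\beta h_\beta f_\beta \in I_d$ since each $h_\beta \in I$ and $I$ is a two-sided ideal (here only right multiplication is needed, which is why $\iota(gf)$ suffices rather than a two-sided version). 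This gives $u \in \iota(I_d)$.

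The main obstacle is the bookkeeping in the previous step: showing rigorously that after expressing an element of $J$ as a $\sigma$-and-multiplication combination of $\iota(I)$, the components that land in $R(d)_d$ can be reorganized so that all the $\sigma$-shifts on the $\iota(h_\beta)$ factors are trivial and the remaining factors assemble into images $\iota(f_\beta)$ of free-algebra elements. The cleanest way I expect to handle this is to work monomial-by-monomial: each monomial appearing in $R$ with support $\{1,\dots,d\}$ that is divisible by some $\sigma^k(\iota(h))$-monomial must, by the disjointness of places and the ordering $j_1 < \cdots < j_d$, split uniquely as a prefix coming from $\iota(h)$ (forcing $k=0$ up to relabeling the starting place, which one absorbs using $\sigma$-invariance of $J$ and the identity $\sigma^k(\iota(I)) \subseteq J$) times a suffix that is $\sigma^e(\iota(f))$. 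Grouping the $\KK$-coefficients back together recovers $\iota$ of an element of $I_d$.

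Finally, since $\iota$ is an injective $\KK$-linear map, $\iota(I_d) = J(d)_d$ gives $\dim_\KK I_d = \dim_\KK \iota(I_d) = \dim_\KK J(d)_d$, which is the last assertion.
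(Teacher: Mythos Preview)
Your overall strategy matches the paper's: write an element of $J_d$ as a sum of terms $\sigma^{k}(\iota(h))\,m$ with $h\in I$ homogeneous and $m$ a monomial of $R$, then use the place-support constraint coming from membership in $R(d)_d$ to recognise each term as $\iota$ of something in $I_d$. The easy inclusion and the final dimension statement are fine.

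The gap is in your treatment of terms with $k>0$, and relatedly in your claim that ``here only right multiplication is needed''. If $h\in I_e$ and the term $\sigma^{k}(\iota(h))\,m$ has place-support $\{1,\dots,d\}$, this does \emph{not} force $k=0$: it forces $m$ to have support $\{1,\dots,k\}\cup\{k+e+1,\dots,d\}$, so $m$ factors as $u'\cdot v'$ with $u'\in R(k)_k$ and $v'=\sigma^{k+e}(\iota(v))$. Writing $u'=\iota(u)$, the term becomes $\iota(u)\,\sigma^{k}(\iota(h))\,\sigma^{k+e}(\iota(v))=\iota(u\,h\,v)$, and one needs $u\,h\,v\in I_d$, which uses that $I$ is a \emph{two-sided} ideal (left multiplication by $u$). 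Your appeal to ``$\sigma$-invariance of $J$'' and ``$\sigma^k(\iota(I))\subset J$'' goes in the wrong direction: those facts place elements into $J$, whereas here you must extract from $J(d)_d$ a preimage in $I_d$. Concretely, for $h\in I_1$ the element $\sigma(\iota(h))\,x_{i1}\in J(2)_2$ equals $\iota(x_i h)$, and $x_i h\in I_2$ only because $I$ is closed under left multiplication. This is exactly how the paper's proof proceeds: it keeps $k_j$ arbitrary, splits $m'_j=u'_j\,\sigma^{k_j+d_j}(v'_j)$, and concludes $h'=\iota\big(\sum_j u_j g_j v_j\big)$. Once you replace your ``force $k=0$'' step with this two-sided splitting, your argument becomes the paper's.
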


\begin{proof}
It is clear that $\iota(I_d)\subset J(d)_d$. Let now $g_j\in I$ ($1\leq j\leq s$)
be a homogeneous element of degree $d_j$ and put $g'_j = \iota(g_j)\in J$.
Any element of the subspace $J_d\subset J$ is a $\KK$-linear combination of elements
of type $h' = \sum_j \sigma^{k_j}(g'_j) m'_j$ where $m'_j$ is a monomial in $R_{d-d_j}$.
Moreover, all monomials of $\sigma^{k_j}(g'_j)$ are of type
\[
x_{i_1 k_j+1}\cdots x_{i_{d_j} k_j+d_j}
\]
with $1\leq i_1,\ldots,i_{d_j}\leq n$. Assume now that $h'\in J(d)_d = J_d\cap R(d)$.
Since by definition $R = P/Q$, it follows necessarily that 
$m'_j = u'_j\sigma^{k_j+d_j}(v'_j)$ where $u'_j$ is a monomial in $R(k_j)_{k_j}$
and $v'_j$ is a monomial in $R(d-k_j-d_j)_{d-k_j-d_j}$. By defining $u_j =
\iota^{-1}(u'_j), v_j = \iota^{-1}(v'_j)$, we conclude that
\[
h' = \sum_j \iota(u_j) \sigma^{k_j}(\iota(g_j)) \sigma^{k_j+d_j}(\iota(v_j)) =
\iota(\sum_j u_j g_j v_j)\in\iota(I_d).
\]
\end{proof}

Since $\iota$ is an injective map, the above result implies that the mapping
$I\mapsto L(I)$ is also an embedding that we call the {\em (ideal) letterplace
correspondence}. It is easy to characterize the letterplace analogues among
all graded $\sigma$-invariant ideals of $R$.

\begin{proposition}
Let $J$ be a graded $\sigma$-invariant ideal of $R$ which is $\sigma$-generated
by the graded subspace $\bigoplus_d J(d)_d$ where $J(d) = J\cap R(d)$. Then,
there exists a (unique) graded two-sided ideal $I\subset F$ such that $J = L(I)$.
\end{proposition}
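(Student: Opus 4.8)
The plan is to construct the ideal $I$ explicitly from the component data of $J$ and then verify $J = L(I)$. First I would set $I_d = \iota^{-1}(J(d)_d)$ for every $d\geq 0$ and let $I = \bigoplus_d I_d \subset F$; since $\iota$ restricted to $F_d$ is a linear isomorphism onto $R(d)_d$, each $I_d$ is a well-defined $\KK$-subspace of $F_d$. The first real task is to show that $I$ is a two-sided ideal of $F$, i.e. that $u g v \in I$ whenever $g\in I_d$ and $u,v\in W$ are monomials of degrees $a$ and $b$. Using the multiplicativity identity $\iota(u g v) = \iota(u)\,\sigma^{a}(\iota(g))\,\sigma^{a+d}(\iota(v))$, the element $\iota(ug v)$ lies in $R(a+d+b)$ (its letterplace indices run through $1,\dots,a+d+b$), and it lies in $J$ because $\iota(g)\in J$, $J$ is $\sigma$-invariant, and $J$ is an ideal. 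Hence $\iota(ugv)\in J(a+d+b)_{a+d+b}$, so $ugv\in I_{a+d+b}$, which gives the ideal property. Being a direct sum of homogeneous pieces, $I$ is automatically graded.

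Next I would prove $L(I) = J$. The inclusion $L(I)\subset J$ is the easy direction: $\iota(I)\subset J$ by construction of $I$ (since $\iota(I_d)=J(d)_d\subset J$), and $J$ is $\sigma$-invariant, so the $\sigma$-invariant ideal $\sigma$-generated by $\iota(I)$ is contained in $J$. For the reverse inclusion $J\subset L(I)$, I would use the hypothesis that $J$ is $\sigma$-generated by $\bigoplus_d J(d)_d$. Thus it suffices to show $J(d)_d\subset L(I)$ for each $d$; but $J(d)_d = \iota(I_d)\subset\iota(I)\subset L(I)$, and since $L(I)$ is a $\sigma$-invariant ideal containing $\iota(I)$ and $J$ is $\sigma$-generated by the $J(d)_d$'s, every element of $J$ is an $R$-combination of $\sigma$-shifts of elements of $\bigcup_d J(d)_d$, hence lies in $L(I)$. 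This gives $J\subset L(I)$ and therefore $J = L(I)$.

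Uniqueness is then immediate from the remark following the earlier proposition: the mapping $I\mapsto L(I)$ is an embedding (it is injective because $\iota$ is), so there is at most one graded two-sided ideal $I$ with $L(I)=J$, and the $I$ just constructed is it. One can also see uniqueness directly: if $L(I')=J$ then by the degree-wise identity $\iota(I'_d) = J(d)_d = \iota(I_d)$, and injectivity of $\iota$ forces $I'_d = I_d$ for all $d$.

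The main obstacle I anticipate is the verification that $I$ is closed under two-sided multiplication by arbitrary elements of $F$ — specifically, checking that after applying $\iota$ to $ugv$ the resulting monomials really do sit inside the "initial segment" subalgebra $R(a+d+b)$ and inside $J$. This requires careful bookkeeping with the place-index shifts induced by $\sigma$ and the fact that, in $R = P/Q$, multiplying letterplace monomials with disjoint place-ranges behaves like concatenation of words; once that is set up cleanly via the identity $\iota(gf) = \iota(g)\sigma^{\deg g}(\iota(f))$ the rest is routine. The $\sigma$-generation hypothesis on $J$ does exactly the work needed to recover all of $J$ from its "diagonal" pieces $J(d)_d$, so no further structural input is required.
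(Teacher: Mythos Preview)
Your proposal is correct and follows essentially the same approach as the paper: define $I_d = \iota^{-1}(J(d)_d)$, set $I = \bigoplus_d I_d$, and verify the two-sided ideal property via the identity $\iota(ugv) = \iota(u)\,\sigma^{a}(\iota(g))\,\sigma^{a+d}(\iota(v))\in J(a+d+b)_{a+d+b}$. The paper's proof is terser---it stops after establishing that $I$ is a two-sided ideal and leaves the verification $L(I)=J$ and uniqueness implicit---whereas you spell these out, but the substance is identical.
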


\begin{proof}
For all $d\geq 0$, denote $I_d = \iota^{-1}(J(d)_d)$ and define $I = \bigoplus_d I_d$.
We have to show that $I\subset F$ is a two-sided ideal. Then, consider $f\in F_k, g\in I_d$
and $h\in F_{d'}$. One has clearly that
\[
\iota(f g h) = \iota(f)\sigma^k(\iota(g))\sigma^{k+d}(\iota(h))\in J(k+d+d')_{k+d+d'}
\]
and we conclude that $f g h\in I_{k+d+d'}$.
\end{proof}

Owing to the above result, we call {\em letterplace ideals} all the graded
$\sigma$-invariant ideals $J\subset R$ that are $\sigma$-generated by
$\bigoplus_d J(d)_d$. In other words, the letterplace correspondence establishes
a bijection between all the graded two-sided ideals of $F$ and the class of their
analogues which are the letterplace ideals of $R$.


\section{Module letterplace embedding}

As in Section 2, we consider a finitely generated graded algebra $A = F/I$ where
$I\subset F$ is a graded two-sided ideal. Moreover, with the notation of Section 4
we have the (infinitely generated commutative) graded algebra $R = P/Q$ which is
endowed with the algebra endomorphism $\sigma:x_{ij}\mapsto x_{ij+1}$.
Then, we consider $J = L(I)\subset R$ the letterplace analogue of $I$ and
we define the quotient graded algebra $S = R/J$. Since $J$ is the letterplace
analogue of $I$, observe that $S$ is endowed with an algebra endomorphism
induced by $\sigma$. By abuse of notation, we will denote this map also by $\sigma$.
Moreover, we have a graded $\KK$-linear embedding $A\to S$ induced by $\iota$
that we will denote again by this symbol.

Fix now an integer $\delta\geq 0$ and denote by $A[-\delta]$ the algebra $A$ which
is endowed with the grading induced by $\delta$. In the same way, we define the graded
algebra $S[-\delta]$. Consider now the graded $\KK$-linear embedding
$A[-\delta]\to S[-\delta]$ such that $f\mapsto \sigma^{\delta}(\iota(f))$.
By abuse of notation, we denote this map also by $\iota$. 

To the aim of describing the image $\iota(A[-\delta]_d)\subset S[-\delta]_d$
for any degree $d\geq 0$, we introduce the following objects. Consider the subalgebra
\[
P(\delta,d) = \KK[x_{ij}\mid 1\leq i\leq n, \delta + 1\leq j\leq d]\subset P
\]
and define its ideal $Q(\delta,d) = Q\cap P(\delta,d)$. Then, we consider the quotient
algebra $R(\delta,d) = P(\delta,d)/Q(\delta,d)$ which is canonically embedded in $R$.
Finally, consider the ideal $J(\delta,d) = J\cap R(\delta,d)$ and define the quotient
$S(\delta,d) = R(\delta,d)/J(\delta,d)$. If we use the grading induced by $\delta$ then
we denote this algebra as $S(\delta,d)[-\delta]$. It is immediate to show that
\[
\iota(A[-\delta]_d) = S(\delta,d)[-\delta]_d.
\]

One can easily extend the above map $\iota$ to graded free modules in the following way.
Consider the graded free right $A$-module $\bigoplus_{1\leq i\leq r} A[-\delta_i]$
whose grading is defined by the integers $\delta_i\geq 0$. Denote $\{e_i\}$ its
canonical basis and therefore one has that $\deg(e_i) = \delta_i$, for all $i$.
In a similar way, we define $\bigoplus_{1\leq i\leq r} S[-\delta_i]$ as the graded
free $S$-module such that the elements of its canonical basis $\{e'_i\}$ have degrees
$\deg(e'_i) = \delta_i$, for any $i$. Then, we consider the graded $\KK$-linear embedding
$\bigoplus_i A[-\delta_i]\to \bigoplus_i S[-\delta_i]$ such that, for any $f\in A$
\[
e_i f\mapsto e'_i \sigma^{\delta_i}(\iota(f)).
\]
By abuse of notation, we denote also this mapping as $\iota$ and we call it the
{\em module letterplace embedding}. For all degrees $d\geq 0$, we have clearly that 
\[
\iota(\bigoplus_i A[-\delta_i]_d) = \bigoplus_i S(\delta_i,d)[-\delta_i]_d.
\]
In particular, if $\delta_i = 0$ for all $i$ then one has that
$\bigoplus_i A[-\delta_i] = A^r, \bigoplus_i S[-\delta_i] = S^r$ and we denote
$S(d)^r = S(0,d)^r$, for any $d$. It holds hence that $\iota(A^r_d)\subset S(d)^r_d$.
We will make use of this notation in Section 7. We conclude this section with
the following key property.

\begin{proposition}
\label{iotapro}
Let $g\in\bigoplus_i A[-\delta_i]$ be a homogeneous element of degree $d$ and let $f\in A$.
Then, one has that $\iota(g f) = \iota(g)\sigma^d(\iota(f))$.
\end{proposition}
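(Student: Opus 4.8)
The plan is to reduce the claim to the already-established multiplicative property of the scalar map $\iota:A\to S$ together with the elementary behaviour of the endomorphism $\sigma$ on the free module $\bigoplus_i S[-\delta_i]$. First I would write $g$ in coordinates with respect to the canonical basis: since $g$ is homogeneous of degree $d$, we have $g = \sum_i e_i g_i$ with $g_i\in A_{d-\delta_i}$ for each $i$. By definition of the module letterplace embedding, $\iota(g) = \sum_i e'_i\,\sigma^{\delta_i}(\iota(g_i))$.

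Next I would compute $\iota(gf)$ directly. Right multiplication gives $gf = \sum_i e_i (g_i f)$, and $g_i f$ has degree $(d-\delta_i) + \deg(f)$, so again by definition of the embedding, $\iota(gf) = \sum_i e'_i\,\sigma^{\delta_i}(\iota(g_i f))$. At this point I invoke the scalar multiplicativity recalled in Section 4 (the identity $\iota(g'h') = \iota(g')\sigma^{\deg(g')}(\iota(h'))$ for $g'\in F$, $h'\in F$, which passes to $A$ and $S$ since $J = L(I)$): applying it with $g' = g_i$ (of degree $d - \delta_i$) and $h' = f$ yields $\iota(g_i f) = \iota(g_i)\,\sigma^{d-\delta_i}(\iota(f))$. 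Substituting, and using that $\sigma$ is an algebra endomorphism so that $\sigma^{\delta_i}(ab) = \sigma^{\delta_i}(a)\sigma^{\delta_i}(b)$ and $\sigma^{\delta_i}\circ\sigma^{d-\delta_i} = \sigma^d$, we get
\[
\iota(gf) = \sum_i e'_i\,\sigma^{\delta_i}\bigl(\iota(g_i)\,\sigma^{d-\delta_i}(\iota(f))\bigr)
= \sum_i e'_i\,\sigma^{\delta_i}(\iota(g_i))\,\sigma^{d}(\iota(f)).
\]

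Finally I would recognize the right-hand side as $\bigl(\sum_i e'_i\,\sigma^{\delta_i}(\iota(g_i))\bigr)\sigma^d(\iota(f)) = \iota(g)\,\sigma^d(\iota(f))$, where the first equality is just the $S$-module structure on $\bigoplus_i S[-\delta_i]$ (the scalar $\sigma^d(\iota(f))\in S$ factors out of the sum). This completes the argument. The only point requiring any care—the ``main obstacle,'' such as it is—is keeping the degree bookkeeping consistent: one must check that the exponent of $\sigma$ attached to each component $i$ in $\iota(gf)$, namely $\delta_i$ applied after the internal $\sigma^{d-\delta_i}$ coming from scalar multiplicativity, telescopes to exactly $\sigma^d$ independently of $i$, which is what allows the common scalar $\sigma^d(\iota(f))$ to be pulled out. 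Everything else is a direct unwinding of definitions, so I would present the proof in essentially the three displayed lines above.
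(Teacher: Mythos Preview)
Your proof is correct and follows essentially the same approach as the paper: write $g=\sum_i e_i g_i$ with $g_i\in A_{d-\delta_i}$, apply the definition of $\iota$ componentwise to $gf$, use the scalar identity $\iota(g_i f)=\iota(g_i)\sigma^{d-\delta_i}(\iota(f))$, and then collapse $\sigma^{\delta_i}\circ\sigma^{d-\delta_i}=\sigma^d$ to pull out the common factor. Your additional remarks about $\sigma$ being an algebra endomorphism and the degree bookkeeping are helpful elaborations, but the argument is identical in substance to the paper's.
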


\begin{proof}
Denote $g = \sum_i e_i g_i$ where $g_i\in A[-\delta_i]_d = A_{d-\delta_i}$. Then
$g f = \sum_i e_i g_i f$ and therefore
\begin{equation*}
\begin{gathered}
\iota(g f) = \sum_i e'_i \sigma^{\delta_i}(\iota(g_i f)) =
\sum_i e'_i \sigma^{\delta_i} ( \iota(g_i) \sigma^{d-\delta_i}(\iota(f)) ) = \\
\sum_i e'_i \sigma^{\delta_i}(\iota(g_i)) \sigma^d(\iota(f)) =
\iota(g) \sigma^d(\iota(f)).
\end{gathered}
\end{equation*}
\end{proof}


\section{Letterplace modules}

Similarly to what we have done for ideals in Section 4, we introduce here
the concept that a noncommutative module can be associated to a commutative one
in a very meaningful way. One main difference with the case of graded two-sided
ideals is that for graded right modules we do not need to require the
$\sigma$-invariance of the commutative analogues. In fact, in the letterplace
correspondence one has that this property is inherently associated to left
structures. Note that this is a good motivation for us to prefer noncommutative
right modules to left ones.

\begin{definition}
Let $M$ be a graded right submodule of $\bigoplus_{1\leq i\leq r} A[-\delta_i]$.
We define $L(M)\subset\bigoplus_{1\leq i\leq r} S[-\delta_i]$ as the graded
submodule generated by $\iota(M)$. We call $L(M)$ the {\em letterplace analogue of $M$}.
\end{definition}

It is clear that $L(\bigoplus_i A[-\delta_i])$ is exactly the free module
$\bigoplus_i S[-\delta_i]$. Recall also that $\iota(\bigoplus_i A[-\delta_i]_d) =
\bigoplus_i S(\delta_i,d)[-\delta_i]_d$, for all degrees $d\geq 0$. This property
can be extended to any graded right submodule of $\bigoplus_i A[-\delta_i]$
in the following way.

\begin{proposition}
Let $M\subset\bigoplus_i A[-\delta_i]$ be a graded right submodule and denote
$M' = L(M)$. For any $d\geq 0$, one has that $\iota(M_d) = M'(d)_d$ where
$M'(d) = M'\cap \bigoplus_i S(\delta_i,d)[-\delta_i]$. In particular, we obtain
that $\dim_\KK M_d = \dim_\KK M'(d)_d$.
\end{proposition}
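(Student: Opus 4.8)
The plan is to mimic the proof of the analogous statement for ideals (the proposition establishing $\iota(I_d) = J(d)_d$ in Section 4), adapting it to the module setting. First I would note that the inclusion $\iota(M_d) \subset M'(d)_d$ is immediate: every element of $M_d$ maps under $\iota$ into $M' = L(M)$ by definition, it lands in degree $d$, and since $g \in \bigoplus_i A[-\delta_i]_d$ we have $\iota(g) \in \bigoplus_i S(\delta_i,d)[-\delta_i]_d$ by the identity $\iota(\bigoplus_i A[-\delta_i]_d) = \bigoplus_i S(\delta_i,d)[-\delta_i]_d$ recalled just before the statement. So $\iota(M_d) \subset M' \cap \bigoplus_i S(\delta_i,d)[-\delta_i] = M'(d)$ in degree $d$.

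For the reverse inclusion, let $\{g_j\}$ be a homogeneous generating set of $M$ with $\deg(g_j) = d_j$, and put $g'_j = \iota(g_j)$. A general element of $M'_d$ is a $\KK$-linear combination of terms $h' = \iota(g_j)\,m'_j$ where $m'_j \in S_{d - d_j}$ (using that $L(M)$ is generated by $\iota(M)$ as a right $S$-module, together with Proposition~\ref{iotapro} which lets one push the ``shift'' $\sigma^{d_j}$ onto the scalar). I would then assume $h' \in M'(d)_d$, i.e. $h' \in \bigoplus_i S(\delta_i,d)[-\delta_i]$, and argue componentwise. Writing $g_j = \sum_i e_i g_{ij}$ with $g_{ij} \in A_{d_j - \delta_i}$, the $i$-th component of $h'$ is $\sigma^{\delta_i}(\iota(g_{ij}))\,m'_j$ summed over $j$; the condition that this lies in $S(\delta_i,d)[-\delta_i]_d$ forces each scalar $m'_j$ (after the relevant shift) to be supported on the ``letterplace-admissible'' variables, exactly as in the ideal case. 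Concretely, because $R = P/Q$ kills products $x_{ij}x_{kj}$ sharing a place, and because the leading block of $\iota(g_{ij})$ occupies places $\delta_i{+}1,\dots,d_j$, one deduces $m'_j = \sigma^{d_j}(\iota(f_j))$ for some $f_j \in A_{d-d_j}$. Then $h' = \sum_j \iota(g_j)\sigma^{d_j}(\iota(f_j)) = \iota\bigl(\sum_j g_j f_j\bigr)$ by Proposition~\ref{iotapro}, and $\sum_j g_j f_j \in M_d$.

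The main obstacle I anticipate is the bookkeeping in the componentwise place-analysis: one must be careful that the shifts $\sigma^{\delta_i}$ built into the module letterplace embedding interact correctly with the place-disjointness constraint imposed by $Q$, and that ``supported on places $\geq$ something'' is stable enough to conclude $m'_j$ is genuinely in the image of $\iota$ rather than merely a combination of admissible monomials. This is precisely where the quotient by $Q$ (forcing strictly increasing place indices on surviving monomials) does the work, so I would isolate that as a small lemma or inline claim: a monomial of $R_d$ whose first $d_j$ places are forced to agree with those of $\iota(g_{ij})$ must factor as $\iota(u)\cdot(\text{tail})$ with the tail a shift of $\iota$ of something of degree $d-d_j$. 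The remaining ``in particular'' about dimensions is then immediate since $\iota$ is an injective $\KK$-linear map, so $\dim_\KK M_d = \dim_\KK \iota(M_d) = \dim_\KK M'(d)_d$.
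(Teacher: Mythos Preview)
Your proposal is correct and follows essentially the same route as the paper's proof: establish the easy inclusion $\iota(M_d)\subset M'(d)_d$ from $\iota(\bigoplus_i A[-\delta_i]_d)=\bigoplus_i S(\delta_i,d)[-\delta_i]_d$, then write a general $h'\in M'_d$ as $\sum_j g'_j f'_j$ with $g'_j=\iota(g_j)$, pass to components $h'_i=\sum_j\sigma^{\delta_i}(\iota(g_{ij}))f'_j$, and use the place constraint coming from $R=P/Q$ to force $f'_j=\sigma^{\Delta_j}(\iota(f_j))$. The paper dispatches this last step with a single ``we obtain clearly'', whereas you flag it as the delicate point and propose isolating it; your caution is well placed, but the underlying argument and its structure are the same.
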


\begin{proof}
From $\iota(\bigoplus_i A[-\delta_i]_d) = \bigoplus_i S(\delta_i,d)[-\delta_i]_d$
it follows immediately that $\iota(M_d)\subset M'(d)_d$. Moreover, any element
$h'\in M'_d$ is such that $h' = \sum_j g'_j f'_j$ where $g'_j = \iota(g_j),
g_j\in M_{\Delta_j}$ and $f'_j\in S_{d-\Delta_j}$. In other words,
if $h' = \sum_i e'_i h'_i$ ($h'_i\in S_{d-\delta_i}$) and $g_j = \sum_i e_i g_{ij}$
($g_{ij}\in A_{\Delta_j-\delta_i}$) then $h'_i = \sum_j \sigma^{\delta_i}(\iota(g_{ij})) f'_j$.
Observe that $\sigma^{\delta_i}(\iota(g_{ij}))\in S(\delta_i,\Delta_j)_{\Delta_j-\delta_i}$.
By assuming that $h'\in M'(d)_d$, that is, $h'_i\in S(\delta_i,d)_{d-\delta_i}$
we obtain clearly that $f'_j\in S(\Delta_j,d)_{d-\Delta_j}$, that is,
$f'_j = \sigma^{\Delta_j}(\iota(f_j))$ with $f_j\in A_{d-\Delta_j}$. We conclude that
$h' = \iota(h)$ where $h = \sum_j g_j f_j\in M_d$.
\end{proof}

Owing to $\iota$ is an injective map, by the above result we have that the mapping
$M\mapsto L(M)$ is also an embedding that we call the {\em (module) letterplace
correspondence}. We want now to characterize the letterplace analogues among
all graded submodules of $\bigoplus_i S[-\delta_i]$.

\begin{proposition}
Let $M'$ be a graded submodule of $\bigoplus_i S[-\delta_i]$ which is generated
by the graded subspace $\bigoplus_d M'(d)_d$ where $M'(d) =
M'\cap \bigoplus_i S(\delta_i,d)[-\delta_i]$. Then, there exists a (unique) graded
right submodule $M\subset \bigoplus_i A[-\delta_i]$ such that $M' = L(M)$.
\end{proposition}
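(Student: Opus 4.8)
The plan is to mirror the proof given above for the letterplace correspondence of graded two-sided ideals. First I would define, for each degree $d\geq 0$, the $\KK$-subspace $M_d = \iota^{-1}(M'(d)_d)\subset\bigoplus_i A[-\delta_i]_d$ and set $M = \bigoplus_d M_d$; this is legitimate because $M'(d)_d\subseteq\bigoplus_i S(\delta_i,d)[-\delta_i]_d = \iota(\bigoplus_i A[-\delta_i]_d)$, so $\iota$ restricted to the degree-$d$ component carries $M_d$ bijectively onto $M'(d)_d$, and in particular $\iota(M_d) = M'(d)_d$ for all $d$. The statement then reduces to three checks: that $M$ is a graded right $A$-submodule, that $L(M) = M'$, and that $M$ is uniquely determined.

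The heart of the matter is the submodule property, and here the key ingredient is Proposition \ref{iotapro}. Given a homogeneous $g\in M_d$ and $f\in A_k$, one has $\iota(g f) = \iota(g)\,\sigma^d(\iota(f))$. Since $\iota(g)\in M'(d)_d\subset M'$ and $M'$ is a submodule of $\bigoplus_i S[-\delta_i]$, the right-hand side lies in $M'$; it remains to see that it also lies in the truncated component $\bigoplus_i S(\delta_i,d+k)[-\delta_i]_{d+k}$. This is a bookkeeping of place-indices: writing $\iota(g) = \sum_i e'_i h'_i$ with $h'_i\in S(\delta_i,d)_{d-\delta_i}$, each $h'_i$ is a combination of monomials in variables whose place index lies between $\delta_i+1$ and $d$, whereas $\sigma^d(\iota(f))$ is a combination of monomials in variables with place index between $d+1$ and $d+k$; hence $h'_i\,\sigma^d(\iota(f))$ only involves variables with place index between $\delta_i+1$ and $d+k$. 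Therefore $\iota(g f)\in M'\cap\bigoplus_i S(\delta_i,d+k)[-\delta_i]_{d+k} = M'(d+k)_{d+k}$, so $g f\in M_{d+k}$, and $M$ is closed under the right $A$-action; being a direct sum of homogeneous components it is graded by construction.

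For the equality $L(M) = M'$: since $\iota(M) = \bigcup_d\iota(M_d) = \bigcup_d M'(d)_d\subseteq M'$ and $L(M)$ is by definition the submodule generated by $\iota(M)$, one gets $L(M)\subseteq M'$; conversely $M'$ is, by hypothesis, generated by the graded subspace $\bigoplus_d M'(d)_d$, which is precisely the $\KK$-span of $\iota(M)$, so $M'\subseteq L(M)$. For uniqueness I would appeal to the fact already noted that $M\mapsto L(M)$ is an embedding (because $\iota$ is injective); alternatively, if $L(M_1) = M' = L(M_2)$, then the preceding Proposition yields $\iota((M_1)_d) = M'(d)_d = \iota((M_2)_d)$ for every $d$, whence $(M_1)_d = (M_2)_d$ by injectivity of $\iota$.

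I do not anticipate a genuine obstacle: the argument is the module analogue of the ideal case and is in fact slightly lighter, since for right modules no $\sigma$-invariance is required, so the factorization step that in the ideal proof splits a monomial as $m'_j = u'_j\sigma^{k_j+d_j}(v'_j)$ collapses here to the one-sided statement about place-indices used above. The only point that deserves a careful line or two is the index-tracking verification that $\iota(g f)$ stays inside the truncated free submodule $\bigoplus_i S(\delta_i,d+k)[-\delta_i]$.
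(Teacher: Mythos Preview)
Your proposal is correct and follows essentially the same approach as the paper: define $M_d = \iota^{-1}(M'(d)_d)$, set $M = \bigoplus_d M_d$, and use Proposition~\ref{iotapro} to verify closure under the right $A$-action. Your write-up is in fact more detailed than the paper's, which dispatches the index-tracking with a single ``clearly'' and leaves the verification of $L(M)=M'$ and uniqueness implicit.
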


\begin{proof}
For all $d\geq 0$, denote $M_d = \iota^{-1}(M'(d)_d)$ and define $M = \bigoplus_d M_d$.
We have to show that $M$ is a right $A$-module. Then, consider $g\in M_d$ and $f\in A_{d'}$. 
One has clearly that $\iota(g f) = \iota(g)\sigma^d(\iota(f))\in M'(d+d')_{d+d'}$
and we conclude that $g f\in M_{d+d'}$.
\end{proof}

Accordingly with this result, we call {\em letterplace submodules} all the graded
submodules $M'\subset\bigoplus_i S[-\delta_i]$ that are generated by $\bigoplus_d M'(d)_d$.
Then, the letterplace correspondence defines a bijection between all the graded
right submodules of $\bigoplus_i A[-\delta_i]$ and the class of their analogues
which are the letterplace submodules of $\bigoplus_i S[-\delta_i]$.

In a similar way to the case of graded right $A$-modules, we have that minimal bases
and graded Betti numbers are defined for graded $S$-modules. We analyze now how
the correspondence $M\mapsto L(M)$ behaves with respect to such objects.

\begin{theorem}
\label{bascorr2}
Consider a graded right submodule $M\subset \bigoplus_i A[-\delta_i]$ and its
letterplace analogue $M' = L(M)\subset \bigoplus_i S[-\delta_i]$. Moreover,
let $\{g_j\}$ be a set of homogeneous elements of $M$ and define $g'_j = \iota(g_j)$,
for all $j$. Then, $\{g_j\}$ is a (minimal) basis of $M$ if and only if
$\{g'_j\}$ is a (minimal) basis of $M'$. In particular, we have that
$b_d(M) = b_d(M')$, for all $d\geq 0$.
\end{theorem}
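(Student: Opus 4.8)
The plan is to mimic the structure of the proof of Theorem \ref{bascorr}, replacing the homogenization map $\eta$ by the module letterplace embedding $\iota$ and using the key multiplicativity property established in Proposition \ref{iotapro}, namely $\iota(g f) = \iota(g)\,\sigma^d(\iota(f))$ for a homogeneous $g$ of degree $d$. First I would prove the ``if and only if'' for ordinary (not necessarily minimal) bases. Suppose $\{g_j\}$ generates $M$ as a right $A$-module. Since $M' = L(M)$ is by definition the graded $S$-submodule generated by $\iota(M)$, and since every element of $M$ is a right $A$-combination $\sum_j g_j f_j$, applying $\iota$ and Proposition \ref{iotapro} shows $\iota(\sum_j g_j f_j) = \sum_j g'_j\,\sigma^{\Delta_j}(\iota(f_j))$, so $\iota(M)$ — and hence $M'$ — is generated by $\{g'_j\}$ as an $S$-module. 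Conversely, if $\{g'_j\}$ generates $M'$, then using the previous Proposition (that $\iota(M_d) = M'(d)_d$ and that $M'$ is generated by the ``diagonal'' subspace $\bigoplus_d M'(d)_d$) together with injectivity of $\iota$, one pulls back a generating expression for an arbitrary $h \in M_d$ to a right $A$-combination of the $g_j$; this is exactly the computation already carried out in the proof of the characterization proposition above.

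Next I would handle minimality. The interesting direction is: if $\{g'_j\}$ is \emph{not} a minimal basis of $M'$, then $\{g_j\}$ is not a minimal basis of $M$. Suppose $g'_1 = \sum_{j>1} g'_j f'_j$ with $f'_j \in S$; homogeneity forces $f'_j \in S_{\Delta_1 - \Delta_j}$. Writing $g_j = \sum_i e_i g_{ij}$ with $g_{ij}\in A_{\Delta_j-\delta_i}$, the $i$-th component of this relation reads $\sigma^{\delta_i}(\iota(g_{i1})) = \sum_{j>1} \sigma^{\delta_i}(\iota(g_{ij}))\, f'_j$ inside $S$. The crucial point — and the one place that needs genuine care — is to argue that each $f'_j$ actually lies in the image $\iota(A_{\Delta_1-\Delta_j})$, i.e. that $f'_j = \sigma^{\Delta_j}(\iota(f_j))$ for some $f_j\in A$. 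This should follow by examining the monomials: the left side $\sigma^{\delta_i}(\iota(g_{i1}))$ lives in $S(\delta_i,\Delta_1)_{\Delta_1-\delta_i}$, i.e. only involves letterplace variables $x_{k\ell}$ with $\delta_i < \ell \le \Delta_1$, and the defining relations $x_{k\ell}x_{k'\ell}$ of $R$ (hence the structure of $S = R/L(I)$) prevent two letters from occupying the same place, so the ``tail'' monomials contributed by the $f'_j$ must occupy places $\Delta_j < \ell \le \Delta_1$ — precisely the condition characterizing $\sigma^{\Delta_j}(\iota(A))$. Once $f'_j = \sigma^{\Delta_j}(\iota(f_j))$ is known, injectivity of $\iota$ and Proposition \ref{iotapro} give $g_1 = \sum_{j>1} g_j f_j$, so $\{g_j\}$ is not minimal. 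The reverse implication (if $\{g_j\}$ is not minimal then neither is $\{g'_j\}$) is immediate by applying $\iota$ to the dependency relation.

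Finally, combining the two ``if and only if'' statements: $\{g_j\}$ is a minimal basis of $M$ iff it is both a basis and has no redundant element, iff (by the above) $\{g'_j\}$ is both a basis of $M'$ and has no redundant element, iff $\{g'_j\}$ is a minimal basis of $M'$. Taking $\{g_j\}$ to be a minimal basis and counting elements of each degree $d$ — which is unambiguous since the degrees are preserved under $\iota$, as $\iota$ is a graded embedding — yields $b_d(M) = b_d(M')$ for all $d\ge 0$. I expect the main obstacle to be exactly the descent step $f'_j\in S \Rightarrow f'_j \in \sigma^{\Delta_j}(\iota(A))$; all the rest is a direct transcription of the arguments already given for the homogenization functor $H$ and for the letterplace correspondence on modules. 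This descent is the module analogue of the place-counting argument used in the proof of Theorem \ref{kercorr} (where one shows $t^{\delta_i}\sum_j g_{ij}f'_j = 0$ in $\bA$ forces $\sum_j g_{ij}f'_j = 0$ in $A$), so the same idea — that the place index records position and the commutative relations forbid collisions — should close it cleanly.
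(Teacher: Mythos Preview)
Your proposal is correct and follows essentially the same route as the paper's proof: use Proposition \ref{iotapro} to push a generating set forward, and for minimality argue that a dependency $g'_1=\sum_{j>1} g'_j f'_j$ forces each $f'_j$ to lie in $\sigma^{\Delta_j}(\iota(A))$ via the place-counting observation that $g'_j\in\bigoplus_i S(\delta_i,\Delta_j)[-\delta_i]_{\Delta_j}$. The paper compresses your descent step into a single ``necessarily $f'_j=\sigma^{\Delta_j}(f''_j)$'', whereas you spell out the reason (the quotient by $Q$ forbids two letters at one place, so the monomials of $f'_j$ must occupy places $>\Delta_j$); otherwise the arguments coincide.
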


\begin{proof}
Assume that $\{g_j\}$ is a homogeneous basis of $M$. Since $M'$ is generated by
$\iota(M) = \bigoplus_d M'(d)_d$, to prove that $\{g'_j\}$ is a basis of $M'$
it is sufficient to show that any homogeneous element $h'\in M'(d)_d$ is generated
by $\{g'_j\}$. Denote $h = \iota^{-1}(h')\in M_d$ and hence $h = \sum_j g_j f_j$
where $f_j\in A_{d - \Delta_j}$ ($\Delta_j = \deg(g_j)$). Then, we have immediately
that $h' = \sum_j g'_j \sigma^{\Delta_j}(\iota(f_j))$.

Suppose now that $g'_1 = \sum_{j>1} g'_j f'_j$ with $f'_j\in S_{\Delta_1-\Delta_j}$.
Because $g'_j = \iota(g_j)\in\bigoplus_i S(\delta_i,\Delta_j)[-\delta_i]_{\Delta_j}$,
we have necessarily that $f'_j = \sigma^{\Delta_j}(f''_j)$ for some element
$f''_j\in S(\Delta_1-\Delta_j)_{\Delta_1-\Delta_j}$. By defining $f_j = \iota^{-1}(f''_j)$,
we have therefore that $g_1 = \sum_{j>1} g_j f_j$.
Finally, by similar arguments one proves that if $\{g'_j\}$ is a (minimal) basis of $M'$
then $\{g_j\}$ is a (minimal) basis of $M$. 
\end{proof}

Observe that by Theorem \ref{bascorr2} one has a method to obtain a minimal basis
$\{g_j\}$ of the graded submodule $M$ starting from a minimal basis $\{g'_j\}$
of its letterplace analogue $M' = L(M)$. In fact, since $M'$ is generated by
$\bigoplus_d M'(d)_d = \iota(M)$, one can assume that $\{g'_j\}\subset\bigoplus_d M'(d)_d$
and therefore $g_j = \iota^{-1}(g'_j)$, for all $j$. Note finally that such bases
are not necessarily finite because both the finitely generated noncommutative
algebra $A$ and the infinitely generated commutative algebra $S$ are generally
not Noetherian ones. Of course, one can essentially restore finiteness just by
considering the homogeneous generators up to some degree.


\section{Letterplace morphisms}

Owing to the module component homogenization of Section 3, when considering submodules
of a finitely generated graded free right $A$-module we can always assume that this is
$A^r$ endowed with the standard grading. Such assumption will be in fact necessary
to the main result of this section which is Theorem \ref{kercorr2}. Consider $M\subset A^r$
a finitely generated graded right submodule and let $\{g_j\}$ ($1\leq j\leq s$)
be a minimal basis of $M$. By putting $\Delta_j = \deg(g_j)$ for all $j$, we define
the finitely generated graded free right $A$-module $\bigoplus_{1\leq j\leq s} A[-\Delta_j]$.
In other words, if $\{\e_j\}$ is its canonical basis then by definition $\deg(\e_j) =
\Delta_j$. In a similar way, one defines $\bigoplus_{1\leq j\leq s} S[-\Delta_j]$
as the finitely generated graded free $S$-module such that the elements of its canonical
basis $\{\e'_j\}$ have degrees $\deg(\e'_j) = \Delta_j$. One defines therefore
the graded right $A$-module homomorphism
\[
\varphi:\bigoplus_j A[-\Delta_j]\to A^r\,,\, \e_j\mapsto g_j
\]
such that $\Im\varphi = M$. By putting $g'_j = \iota(g_j)$, we consider also the graded
$S$-module homomorphism
\[
L(\varphi):\bigoplus_j S[-\Delta_j]\to S^r\,,\, \e'_j\mapsto g'_j.
\]
Note that Theorem \ref{bascorr2} implies that $\Im L(\varphi) = L(M)$. A critical point
is now to understand how the correspondence $\varphi\mapsto L(\varphi)$ behaves with
respect to the kernels. By putting $\varphi' = L(\varphi)$, we have that
\begin{equation*}
\begin{gathered}
K = \Ker\varphi = \{\sum_j\e_j f_j\mid f_j\in A, \sum_j g_j f_j = 0\}, \\
K' = \Ker\varphi' = \{\sum_j\e'_j f_j\mid f_j\in S, \sum_j g'_j f_j = 0\}.
\end{gathered}
\end{equation*}
Recall that the elements of the graded right submodule $K\subset \bigoplus_j A[-\Delta_j]$
are the right syzygies of the basis $\{g_j\}$. In a similar way, the elements
of the graded submodule $K'\subset \bigoplus_j S[-\Delta_j]$ are called the {\em syzygies
of the basis $\{g'_j\}$}. We will prove that $K'$ strictly contains the letterplace
analogue $L(K)$ in a meaningful way. Let us start with the following example.

\begin{Example}
Consider the graded right ideal $I = \langle x_1,\ldots,x_n \rangle\subset F$
and its letterplace analogue $J = L(I) = \langle x_{11},\ldots,x_{n1} \rangle\subset R$.
We have then the maps $\varphi:F[-1]^n\to F\,,\, \varphi(\e_j) = x_j$
and $\varphi' = L(\varphi):R[-1]^n\to R\,,\, \varphi'(\e'_j) = x_{j1}$.
One has clearly that $K = \Ker\varphi = 0$ and hence $L(K) = 0$. Moreover, from $R = P/Q$
it follows that
\[
K' = \Ker\varphi' = \langle \e'_j x_{k1}\mid 1\leq j,k\leq n \rangle.
\]
Clearly $K'$ is not a letterplace submodule but we have that $0 = \iota(K_d) = K'(d)_d =
K'\cap R(1,d)[-1]_d^n$, for all $d\geq 0$.
\end{Example}

From the above example we understand immediately that there is another set of natural
syzygies in $K'$. In fact, since $g'_j = \iota(g_j)\in\iota(A^r_{\Delta_j}) =
S(\Delta_j)^r_{\Delta_j}$ for any $1\leq j\leq s$, we have immediately that
\begin{equation}
\label{trsyz}
C = \langle \e'_j x_{kl}\mid 1\leq j\leq s, 1\leq k\leq n, 1\leq l\leq\Delta_j \rangle
\subset K'.
\end{equation}
Clearly, the given monomial basis of the graded submodule $C$ is a minimal one.
It is useful to consider the following property.

\begin{proposition}
\label{commdia2}
The following diagram is a commutative one
\[
\begin{array}{ccc}
\bigoplus_j A[-\Delta_j] & {\buildrel \varphi\over\longrightarrow} & A^r \\
\iota \downarrow & & \hspace{2pt} \downarrow \iota \\
\bigoplus_j S[-\Delta_j] & {\buildrel L(\varphi)\over\longrightarrow} & S^r \\
\end{array}
\]
\end{proposition}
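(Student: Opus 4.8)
The plan is to mimic the proof of Proposition \ref{commdia}, replacing the homogenization map $\eta$ by the module letterplace embedding $\iota$ and the key identity $\eta(gf)=\eta(g)f$ by Proposition \ref{iotapro}. Since all four maps in the square ($\varphi$, $L(\varphi)$, and the two copies of $\iota$) are $\KK$-linear, it suffices to check commutativity on a spanning set of $\bigoplus_j A[-\Delta_j]$, namely on elements of the form $\e_j f$ with $f\in A$ homogeneous; a general homogeneous element $h=\sum_j\e_j f_j$ then follows by summation.

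First I would go around the square clockwise. Applying $\varphi$ gives $\varphi(\e_j f)=g_j f\in A^r$, which is homogeneous of degree $\Delta_j+\deg(f)$. Now apply the right-hand vertical map, which is the module letterplace embedding for $A^r=\bigoplus_i A[0]$ endowed with the standard grading; since $g_j\in A^r$ is homogeneous of degree $\Delta_j$, Proposition \ref{iotapro} yields
\[
\iota(\varphi(\e_j f)) = \iota(g_j f) = \iota(g_j)\,\sigma^{\Delta_j}(\iota(f)) = g'_j\,\sigma^{\Delta_j}(\iota(f)).
\]
Going around counterclockwise, the left-hand vertical map is the module letterplace embedding for $\bigoplus_j A[-\Delta_j]$, whose definition (Section 6) gives $\iota(\e_j f)=\e'_j\,\sigma^{\Delta_j}(\iota(f))$. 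Since $L(\varphi)$ is a homomorphism of right $S$-modules and $\sigma^{\Delta_j}(\iota(f))\in S$, we get
\[
L(\varphi)(\iota(\e_j f)) = L(\varphi)(\e'_j)\,\sigma^{\Delta_j}(\iota(f)) = g'_j\,\sigma^{\Delta_j}(\iota(f)).
\]
The two expressions coincide, which proves the claim.

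There is no real obstacle here; the only point requiring care is bookkeeping with the two different gradings, that is, remembering that the left vertical $\iota$ carries the shift $\Delta_j$ built into its definition while the right vertical $\iota$ is the embedding for the standard grading on $A^r$, so that the exponent $\Delta_j$ on $\sigma$ is produced on the clockwise path precisely by the degree of $g_j$ via Proposition \ref{iotapro}, and on the counterclockwise path by the definition of the module letterplace embedding on $\bigoplus_j A[-\Delta_j]$.
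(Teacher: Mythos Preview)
Your proof is correct and is essentially identical to the paper's argument: both compute $\iota(\varphi(h))$ via Proposition \ref{iotapro} to obtain $\sum_j g'_j\,\sigma^{\Delta_j}(\iota(f_j))$ and compare with $L(\varphi)(\iota(h))$ using the definition of the module letterplace embedding. The only cosmetic difference is that you check commutativity on the spanning elements $\e_j f$ and then sum, whereas the paper works directly with $h=\sum_j\e_j f_j$; note also that the module letterplace embedding is defined in Section~5, not Section~6.
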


\begin{proof}
If $\varphi' = L(\varphi)$ then one has to prove that $\iota(\varphi(h)) = \varphi'(\iota(h))$,
for all $h = \sum_j \e_j f_j\in \bigoplus_j A[-\Delta_j]$. In fact, by Proposition
\ref{iotapro} we have that
\[
\iota(\varphi(h)) = \iota(\sum_j g_j f_j) = \sum_j \iota(g_j)\sigma^{\Delta_j}(\iota(f_j)) =
\sum_j g'_j\sigma^{\Delta_j}(\iota(f_j)).
\]
On the other hand, one has immediately that
\[
\varphi'(\iota(h)) = \varphi'(\sum_j \e'_j \sigma^{\Delta_j}(\iota(f_j))) =
\sum_j g'_j \sigma^{\Delta_j}(\iota(f_j)).
\]
\end{proof}

\begin{theorem}
\label{kercorr2}
We have that $K' = C + L(K)$. In particular, one obtains that $b_d(K) = b_d(K') - b_d(C)$,
for all $d\geq 0$.
\end{theorem}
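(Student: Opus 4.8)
The plan is to prove the two inclusions $C + L(K) \subseteq K'$ and $K' \subseteq C + L(K)$ separately, and then deduce the Betti number formula from Theorem~\ref{bascorr2} together with the fact that the generators of $C$ form a minimal basis of $C$ and that $C$ has trivial intersection in the appropriate sense with $L(K)$.

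For the inclusion $C + L(K) \subseteq K'$: the containment $C \subseteq K'$ is already recorded in \eqref{trsyz}, so it remains to check $L(K) \subseteq K'$. This follows at once from the commutative diagram of Proposition~\ref{commdia2}: if $h = \sum_j \e_j f_j \in K = \Ker\varphi$, then $\varphi'(\iota(h)) = \iota(\varphi(h)) = \iota(0) = 0$, so $\iota(h) \in K'$; since $L(K)$ is generated by $\iota(K)$ and $K'$ is an $S$-submodule, $L(K) \subseteq K'$.

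The substantial direction is $K' \subseteq C + L(K)$. First I would reduce to showing that the homogeneous component $K'_d$ lies in $(C + L(K))_d$ for every $d$. Take $h' = \sum_j \e'_j f'_j \in K'_d$ with $f'_j \in S_{d-\Delta_j}$, so $\sum_j g'_j f'_j = 0$ in $S^r$. Writing $g'_j = \iota(g_j)$ with $g_j = \sum_i e_i g_{ij}$, $g_{ij} \in A_{\Delta_j - \delta_i}$, and recalling $\delta_i = 0$ here, we have $g'_j = \sum_i e'_i \iota(g_{ij})$ where $\iota(g_{ij}) \in S(\Delta_j)_{\Delta_j}$. The key observation is that the monomials appearing in $\iota(g_{ij})$ only involve the place variables $x_{kl}$ with $1 \le l \le \Delta_j$. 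Now decompose each $f'_j$ according to whether its monomials are ``supported past place $\Delta_j$'': write $f'_j = \sigma^{\Delta_j}(f''_j) + r'_j$ where $\sigma^{\Delta_j}(f''_j)$ collects the part of $f'_j$ lying in the subalgebra generated by $x_{kl}$ with $l > \Delta_j$ (equivalently $f''_j \in S(d - \Delta_j)_{d-\Delta_j}$, i.e.\ $f''_j = \iota(f_j)$ for some $f_j \in A_{d-\Delta_j}$), and $r'_j$ collects the remaining monomials. Every monomial of $\e'_j r'_j$ is then divisible by some $x_{kl}$ with $l \le \Delta_j$, hence $\e'_j r'_j \in C$; so $h' - \sum_j \e'_j \sigma^{\Delta_j}(\iota(f_j)) \in C \subseteq K'$, and therefore $h'' := \sum_j \e'_j \sigma^{\Delta_j}(\iota(f_j)) \in K'$ as well. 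It remains to see $h'' \in L(K)$. From $h'' \in K'$ we get $\sum_j g'_j \sigma^{\Delta_j}(\iota(f_j)) = 0$; by Proposition~\ref{iotapro} (applied coordinatewise) this is $\iota(\sum_j g_j f_j) = 0$, and since $\iota$ is injective, $\sum_j g_j f_j = 0$ in $A^r$, i.e.\ $h := \sum_j \e_j f_j \in K$. By the commutative diagram $\iota(h) = h''$, so $h'' \in \iota(K) \subseteq L(K)$. Hence $h' \in C + L(K)$.

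Finally, for the Betti number formula: by Theorem~\ref{bascorr2} one has $b_d(K) = b_d(L(K))$. Since the exhibited monomial basis of $C$ is minimal and, by the degree/place bookkeeping above, is also part of a minimal basis of $C + L(K)$ (the generators of $C$ are the images under $\iota$ of nothing in $K$ — they involve place variables in the range $l \le \Delta_j$ whereas $\iota(K)$ lives in the complementary place range — so no generator of $C$ is needed to generate $L(K)$ modulo the graded maximal ideal, and vice versa), we get $b_d(K') = b_d(C) + b_d(L(K)) = b_d(C) + b_d(K)$, which rearranges to the claimed identity. I expect the main obstacle to be making rigorous the ``support'' decomposition $f'_j = \sigma^{\Delta_j}(f''_j) + r'_j$ and the accompanying claim that $\e'_j r'_j \in C$, together with the verification that $C$ and $L(K)$ contribute independently to a minimal basis of the sum; everything else is a direct application of Propositions~\ref{iotapro} and~\ref{commdia2} and Theorem~\ref{bascorr2}.
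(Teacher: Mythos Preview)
Your argument for the inclusion $C + L(K) \subseteq K'$ is correct and matches the paper. The gap is in the reverse inclusion, specifically in the parenthetical claim ``equivalently $f''_j \in S(d-\Delta_j)_{d-\Delta_j}$, i.e.\ $f''_j = \iota(f_j)$''. This equivalence is false: splitting off the monomials of $f'_j$ that involve some place $l \le \Delta_j$ only guarantees that the remainder $\sigma^{\Delta_j}(f''_j)$ has all place indices $> \Delta_j$; it does \emph{not} force those indices to lie in $\{\Delta_j+1,\ldots,d\}$. A monomial of degree $d-\Delta_j$ in $R$ has $d-\Delta_j$ distinct place indices, but these can be spread out arbitrarily. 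For a concrete counterexample take $n=1$, $I = \langle x^2\rangle$, $M = \langle x\rangle \subset A$, so $g'_1 = x_1$ and $\Delta_1 = 1$. Then $h' = \e'_1 x_2 x_4 \in K'_3$ (since $x_1 x_2 x_4 \in J$), your decomposition gives $r'_1 = 0$ and $f''_1 = x_1 x_3$, but $\iota(A_2)$ is spanned by $\iota(x^2) = x_1 x_2 = 0$ in $S$, so $x_1 x_3 \notin \iota(A_2)$ and no $f_1 \in A_2$ satisfies $\iota(f_1) = f''_1$. Your appeal to Proposition~\ref{iotapro} to conclude $h'' = \iota(h)$ therefore breaks down. (In this example $h''$ does lie in $L(K)$, namely $h'' = \iota(\e_1 x)\cdot x_4$, but not for the reason you give.)

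The paper closes this gap by a different device: rather than decomposing an arbitrary $h' \in K'_d$, it takes $h'$ to be a \emph{minimal generator} of $K'$, reduces modulo $C$ so that $f'_j \in S(\Delta_j,d')$ for some $d' \ge d$, and then invokes minimality of $h'$ together with $\sum_j g'_j f'_j = 0$ to force $d' = d$. Once $d' = d$, each $f'_j$ has degree $d-\Delta_j$ and place indices confined to the set $\{\Delta_j+1,\ldots,d\}$ of exactly that cardinality, so every place is used and $f'_j = \sigma^{\Delta_j}(\iota(f_j))$ genuinely holds. This is precisely the step your argument is missing; you correctly flagged the support decomposition as the obstacle, but the part that actually fails is the identification $f''_j = \iota(f_j)$, not the containment $\e'_j r'_j \in C$ (which is fine). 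Your Betti-number argument is in the same spirit as the paper's and becomes rigorous once one works with minimal generators as above.
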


\begin{proof}
From Proposition \ref{commdia2} it follows that $L(K)\subset K'$ and we have already
observed that $C\subset K'$. Consider now $h' = \sum_j \e'_j f'_j$ any element of a
minimal basis of $K'$ where $f'_j\in S_{d-\Delta_j}$, for some $d\geq 0$
and for all $j$. Modulo $C\subset K'$, we can clearly assume that there is
$d'\geq d$ such that $f'_j\in S(\Delta_j,d')$ for each $j$. Moreover, since $h'$
is a minimal generator of $K'$ and $\sum_j g'_j f'_j = 0$, we may choose $d' = d$,
that is, $f'_j\in S(\Delta_j,d)_{d-\Delta_j}$ for all $j$. We conclude that
$f'_j = \sigma^{\Delta_j}(\iota(f_j))$ with $f_j\in A_{d-\Delta_j}$ and therefore
$h' = \iota(h)$ where $h = \sum_j \e_j f_j\in K$.

Finally, from the above argument it follows also that $b_d(K') =
b_d(C) + b_d(L(K))$ for any $d$, where $b_d(L(K)) = b_d(K)$ owing to
Theorem \ref{bascorr2}.
\end{proof}

Note that the graded Betti numbers $\{b_d(C)\}$ are immediately obtained in the following way.
For any $d\geq 0$, denote $m_d = \#\{ 1\leq j\leq s\mid \Delta_j = d \}$. We have clearly
that $b_0(C) = 0$ and for each $d\geq 0$
\[
b_{d+1}(C) = m_d\cdot n\cdot d.
\]
By Theorem \ref{bascorr2} and Theorem \ref{kercorr2}, one has therefore a letterplace
method to compute the couple of graded Betti numbers sets $\{b_d(M)\}$ and $\{b_d(K)\}$
of a graded right submodule $M\subset A^r$. In case one has that
$M\subset \bigoplus_i A[-\delta_i]$ for some integers $\delta_i\geq 0$, recall that
we can apply to $M$ the module component homogenization described in Section 3.
In particular, this homogenization is always needed for the kernel $K = \Ker\varphi\subset
\bigoplus_j A[-\Delta_j]$ if one wants to compute a minimal free right resolution
of $N = \bigoplus_i A[-\delta_i]/M$. In the next section we study an example to show
how this method works in practice.


\section{An illustrative example}

To the aim of illustrating the proposed methods by means of a concrete example,
let us fix the field $\QQ$ of rational numbers and consider the free
associative algebra in three variables $F = \QQ\langle x,y,z \rangle$. Then,
we define the graded two-sided ideal
\[
I = \langle [[a,b],c]\mid a,b,c\in \{x,y,z\} \rangle\subset F
\]
where by definition $[a,b] = a b - b a$. The corresponding quotient graded algebra
$A = F/I$ is the universal enveloping algebra of the free nilpotent Lie algebra
of class 2 which is freely generated by three variables. We want to compute the graded
homology of $A$, that is, a minimal free right resolution of its base field $\QQ$.
We start by considering the augmentation ideal
\[
M_1 = \langle x, y, z \rangle\subset A.
\]
By linear algebra, one can easily compute that a minimal basis of the ideal $I$
is given by the following (noncommutative) polynomials
\begin{equation}
\begin{gathered}
- [[z,y],z] = z^2y - 2zyz + yz^2,
- [[y,z],y] = zy^2 - 2yzy + y^2z, \\
- [[z,x],z] = z^2x - 2zxz + xz^2,
[[x,z],y] = yzx - zxy + xzy - yxz, \\
[[x,y],z] = zyx - zxy - yxz + xyz,
- [[y,x],y] = y^2x - 2yxy + xy^2, \\
- [[x,z],x] = zx^2 - 2xzx + x^2z,
- [[x,y],x] = yx^2 - 2xyx + x^2y.
\end{gathered}
\end{equation}
This immediately implies that the right syzygy module $M_2\subset A[-1]^3$ of the
minimal basis $\{x,y,z\}$ of $M_1$ has a minimal basis consisting of following
homogeneous elements
\begin{equation}
\begin{gathered}
\label{M2}
\e_3zy - 2\e_3yz + \e_2z^2,
\e_3y^2 - 2\e_2zy + \e_2yz, \\
\e_3zx - 2\e_3xz + \e_1z^2,
\e_2zx - \e_3xy + \e_1zy - \e_2xz, \\
\e_3yx - \e_3xy - \e_2xz + \e_1yz,
\e_2yx - 2\e_2xy + \e_1y^2, \\
\e_3x^2 - 2\e_1zx + \e_1xz,
\e_2x^2 - 2\e_1yx + \e_1xy.
\end{gathered}
\end{equation}
We need now to compute the right syzygy module $M_3\subset A[-3]^8$ of the above basis.
Our approach is based on Theorem \ref{kercorr2} and hence we start by defining the polynomial
algebra $P = \QQ[x_j,y_j,z_j\mid j\geq 1]$ and the quotient algebra $R = P/Q$ where
$Q = \langle a_jb_j\mid a_j,b_j\in \{x_j,y_j,z_j\},j\geq 1 \rangle$. Then, the letterplace
analogue $J = L(I)$ is defined as the graded $\sigma$-invariant ideal of $R$ which is
$\sigma$-generated by the (commutative) polynomials
\begin{equation}
\begin{gathered}
z_1z_2y_3 - 2z_1y_2z_3 + y_1z_2z_3,
z_1y_2y_3 - 2y_1z_2y_3 + y_1y_2z_3, \\
z_1z_2x_3 - 2z_1x_2z_3 + x_1z_2z_3,
y_1z_2x_3 - z_1x_2y_3 + x_1z_2y_3 - y_1x_2z_3, \\
z_1y_2x_3 - z_1x_2y_3 - y_1x_2z_3 + x_1y_2z_3,
y_1y_2x_3 - 2y_1x_2y_3 + x_1y_2y_3, \\
z_1x_2x_3 - 2x_1z_2x_3 + x_1x_2z_3,
y_1x_2x_3 - 2x_1y_2x_3 + x_1x_2y_3.
\end{gathered}
\end{equation}
Observe now that we cannot immediately apply Theorem \ref{kercorr2} because the elements
of the canonical basis of $A[-1]^3$ have degrees different from zero. Then, by the module
component homogenization of Section 3, we transform the basis (\ref{M2}) into the set
\begin{equation}
\label{barM2}
\begin{gathered}
\bee_3tzy - 2\bee_3tyz + \bee_2tz^2,
\bee_3ty^2 - 2\bee_2tzy + \bee_2tyz, \\
\bee_3tzx - 2\bee_3txz + \bee_1tz^2,
\bee_2tzx - \bee_3txy + \bee_1tzy - \bee_2txz, \\
\bee_3tyx - \bee_3txy - \bee_2txz + \bee_1tyz,
\bee_2tyx - 2\bee_2txy + \bee_1ty^2, \\
\bee_3tx^2 - 2\bee_1tzx + \bee_1txz,
\bee_2tx^2 - 2\bee_1tyx + \bee_1txy.
\end{gathered}
\end{equation}
which is a minimal basis of $\bM_2 = H(M_2)\subset \bA^3$ ($\bA = \bF/\bI$ where
$\bF = \QQ\langle x,y,z,t \rangle$ and $\bI$ is the extension of $I$ to $\bF$).
Then, we consider the letterplace analogue $\bM'_2 = L(\bM_2)\subset \bS^3$
($\bS = \bR/\bJ$ where $\bR$ corresponds to $\bF$ and $\bJ = L(\bI)$)
which has the minimal basis
\begin{equation}
\begin{gathered}
\bee'_3t_1z_2y_3 - 2\bee'_3t_1y_2z_3 + \bee'_2t_1z_2z_3,
\bee'_3t_1y_2y_3 - 2\bee'_2t_1z_2y_3 + \bee'_2t_1y_2z_3, \\
\bee'_3t_1z_2x_3 - 2\bee'_3t_1x_2z_3 + \bee'_1t_1z_2z_3,
\bee'_2t_1z_2x_3 - \bee'_3t_1x_2y_3 + \bee'_1t_1z_2y_3 - \bee'_2t_1x_2z_3, \\
\bee'_3t_1y_2x_3 - \bee'_3t_1x_2y_3 - \bee'_2t_1x_2z_3 + \bee'_1t_1y_2z_3,
\bee'_2t_1y_2x_3 - 2\bee'_2t_1x_2y_3 + \bee'_1t_1y_2y_3, \\
\bee'_3t_1x_2x_3 - 2\bee'_1t_1z_2x_3 + \bee'_1t_1x_2z_3,
\bee'_2t_1x_2x_3 - 2\bee'_1t_1y_2x_3 + \bee'_1t_1x_2y_3.
\end{gathered}
\end{equation}
In Section 9 we will show that one can algorithmically compute a minimal basis
of the corresponding syzygy module $\bM'_3\subset \bS[-3]^8$, namely
\begin{equation}
\begin{gathered}
\bee'_jx_k, \bee'_jy_k, \bee'_jz_k, \bee'_jt_k\ (1\leq j\leq 8, 1\leq k\leq 3), \\
\bee'_1y_4 + \bee'_2z_4,
\bee'_1x_4 + \bee'_3y_4 + \bee'_5z_4 - 2 \bee'_4z_4,
\bee'_2x_4 - 2 \bee'_5y_4 + \bee'_4y_4 + \bee'_6z_4, \\
\bee'_3x_4 + \bee'_7z_4,
\bee'_5x_4 + \bee'_4x_4 + \bee'_7y_4 + \bee'_8z_4,
\bee'_6x_4 + \bee'_8y_4, \\
\bee'_1z_4x_5 - \bee'_3z_4y_5 - \bee'_1x_4z_5 + 2\bee'_3y_4z_5 - \bee'_4z_4z_5, \\
\bee'_2z_4x_5 - \bee'_3y_4y_5 + 2\bee'_4z_4y_5 - \bee'_2x_4z_5 - \bee'_4y_4z_5, \\
\bee'_2y_4x_5 - 2\bee'_2x_4y_5 + \bee'_5y_4y_5 - \bee'_4y_4y_5 + \bee'_6z_4y_5 - \bee'_6y_4z_5, \\
\bee'_3y_4x_5 - \bee'_5z_4x_5 - \bee'_3x_4y_5 + 2\bee'_5x_4z_5 + \bee'_8z_4z_5, \\
\bee'_4y_4x_5 - \bee'_6z_4x_5 + \bee'_5x_4y_5 - \bee'_4x_4y_5 + \bee'_8z_4y_5 - \bee'_8y_4z_5, \\
\bee'_4x_4x_5 + 2\bee'_7y_4x_5 - \bee'_8z_4x_5 - \bee'_7x_4y_5 + \bee'_8x_4z_5.
\end{gathered}
\end{equation}
By Theorem \ref{kercorr2} one obtains that a minimal basis of the right syzygy module
$\bM_3\subset\bA[-3]^8$ of the minimal basis (\ref{barM2}) is given by the following
elements
\begin{equation}
\label{barM3}
\begin{gathered}
\bee_1y + \bee_2z,
\bee_1x + \bee_3y + \bee_5z - 2 \bee_4z,
\bee_2x - 2 \bee_5y + \bee_4y + \bee_6z, \\
\bee_3x + \bee_7z,
\bee_5x + \bee_4x + \bee_7y + \bee_8z,
\bee_6x + \bee_8y, \\
\bee_1zx - \bee_3zy - \bee_1xz + 2\bee_3yz - \bee_4z^2, \\
\bee_2zx - \bee_3y^2 + 2\bee_4zy - \bee_2xz - \bee_4yz, \\
\bee_2yx - 2\bee_2xy + \bee_5y^2 - \bee_4y^2 + \bee_6zy - \bee_6yz, \\
\bee_3yx - \bee_5zx - \bee_3xy + 2\bee_5xz + \bee_8z^2, \\
\bee_4yx - \bee_6zx + \bee_5xy - \bee_4xy + \bee_8zy - \bee_8yz, \\
\bee_4x^2 + 2\bee_7yx - \bee_8zx - \bee_7xy + \bee_8xz.
\end{gathered}
\end{equation}
From Theorem \ref{kercorr} it follows that $\bM_3 = H(M_3)$ and therefore a minimal
basis of $M_3$ is obtained simply by substituting in (\ref{barM3}) the canonical basis
$\{\bee_j\}$ of $\bA[-3]^8$ with the canonical basis $\{\e_i\}$ of $A[-3]^8$.

For computing now a minimal basis of the right syzygies module
$M_4\subset A[-4]^6\oplus A[-5]^6$ of the minimal basis of $M_3$, by the same approach
one has to consider the following minimal basis of a graded submodule $\bM'_3\subset \bS^8$
\begin{equation}
\begin{gathered}
\bee'_1t_1t_2t_3y_4 + \bee'_2t_1t_2t_3z_4,
\bee'_1t_1t_2t_3x_4 + \bee'_3t_1t_2t_3y_4 + \bee'_5t_1t_2t_3z_4 - 2 \bee'_4t_1t_2t_3z_4, \\
\bee'_2t_1t_2t_3x_4 - 2 \bee'_5t_1t_2t_3y_4 + \bee'_4t_1t_2t_3y_4 + \bee'_6t_1t_2t_3z_4,
\bee'_3t_1t_2t_3x_4 + \bee'_7t_1t_2t_3z_4, \\
\bee'_5t_1t_2t_3x_4 + \bee'_4t_1t_2t_3x_4 + \bee'_7t_1t_2t_3y_4 + \bee'_8t_1t_2t_3z_4,
\bee'_6t_1t_2t_3x_4 + \bee'_8t_1t_2t_3y_4, \\
\bee'_1t_1t_2t_3z_4x_5 - \bee'_3t_1t_2t_3z_4y_5 - \bee'_1t_1t_2t_3x_4z_5 + 2\bee'_3t_1t_2t_3y_4z_5
- \bee'_4t_1t_2t_3z_4z_5, \\
\bee'_2t_1t_2t_3z_4x_5 - \bee'_3t_1t_2t_3y_4y_5 + 2\bee'_4t_1t_2t_3z_4y_5 - \bee'_2t_1t_2t_3x_4z_5
- \bee'_4t_1t_2t_3y_4z_5, \\
\bee'_2t_1t_2t_3y_4x_5 - 2\bee'_2t_1t_2t_3x_4y_5 + \bee'_5t_1t_2t_3y_4y_5 - \bee'_4t_1t_2t_3y_4y_5
+ \bee'_6t_1t_2t_3z_4y_5 \\
-\ \bee'_6t_1t_2t_3y_4z_5, \\
\bee'_3t_1t_2t_3y_4x_5 - \bee'_5t_1t_2t_3z_4x_5 - \bee'_3t_1t_2t_3x_4y_5 + 2\bee'_5t_1t_2t_3x_4z_5
+ \bee'_8t_1t_2t_3z_4z_5, \\
\bee'_4t_1t_2t_3y_4x_5 - \bee'_6t_1t_2t_3z_4x_5 + \bee'_5t_1t_2t_3x_4y_5 - \bee'_4t_1t_2t_3x_4y_5
+ \bee'_8t_1t_2t_3z_4y_5 \\
-\ \bee'_8t_1t_2t_3y_4z_5, \\
\bee'_4t_1t_2t_3x_4x_5 + 2\bee'_7t_1t_2t_3y_4x_5 - \bee'_8t_1t_2t_3z_4x_5 - \bee'_7t_1t_2t_3x_4y_5
+ \bee'_8t_1t_2t_3x_4z_5.
\end{gathered}
\end{equation}
We can calculate that a minimal basis of the corresponding syzygy module
$\bM'_4\subset\bS[-4]^6\oplus\bS[-5]^6$ is given by the following elements
\begin{equation}
\begin{gathered}
\bee'_jx_k, \bee'_jy_k, \bee'_jz_k, \bee'_jt_k\ (1\leq j\leq 6, 1\leq k\leq 4), \\
\bee'_jx_k, \bee'_jy_k, \bee'_jz_k, \bee'_jt_k\ (7\leq j\leq 12, 1\leq k\leq 5), \\
\bee'_9x_6 + \bee'_3x_5y_6 - \bee'_6z_5y_6 + \bee'_{11}y_6 + \bee'_6y_5z_6,
\bee'_1z_5x_6 - \bee'_7y_6 - \bee'_1x_5z_6 - \bee'_8z_6, \\
\bee'_2z_5x_6 - \bee'_7x_6 - \bee'_4z_5y_6 - \bee'_2x_5z_6 + 2\bee'_4y_5z_6
- \bee'_5z_5z_6 + \bee'_{10}z_6, \\
\bee'_3z_5x_6 - \bee'_8x_6 - \bee'_4y_5y_6 + 2\bee'_5z_5y_6 - 2\bee'_{10}y_6
- \bee'_3x_5z_6 - \bee'_5y_5z_6 \\
+ \bee'_6z_5z_6 + \bee'_{11}z_6, \\
\bee'_1y_5x_6 - 2\bee'_1x_5y_6 + \bee'_2y_5y_6 + \bee'_3z_5y_6 + \bee'_8y_6
- \bee'_3y_5z_6 - \bee'_9z_6, \\
\bee'_4y_5x_6 - \bee'_{10}x_6 - \bee'_4x_5y_6 + \bee'_5x_5z_6 - \bee'_{12}z_6, \\
\bee'_5y_5x_6 - 2\bee'_6z_5x_6 - \bee'_{11}x_6 - \bee'_{12}y_6 + \bee'_6x_5z_6, \\
\bee'_1x_5x_6 - 2\bee'_2y_5x_6 - \bee'_3z_5x_6 - \bee'_8x_6 + \bee'_2x_5y_6
+ \bee'_{10}y_6 - \bee'_6z_5z_6 - 2\bee'_{11}z_6. \\
\end{gathered}
\end{equation}
We conclude that a minimal basis of $M_4$ is given by
\begin{equation}
\begin{gathered}
\label{M4}
\e_9x + \e_3xy - \e_6zy + \e_{11}y + \e_6yz,
\e_1zx - \e_7y - \e_1xz - \e_8z, \\
\e_2zx - \e_7x - \e_4zy - \e_2xz + 2\e_4yz
- \e_5z^2 + \e_{10}z, \\
\e_3zx - \e_8x - \e_4y^2 + 2\e_5zy - 2\e_{10}y
- \e_3xz - \e_5yz + \e_6z^2 + \e_{11}z, \\
\e_1yx - 2\e_1xy + \e_2y^2 + \e_3zy + \e_8y
- \e_3yz - \e_9z, \\
\e_4yx - \e_{10}x - \e_4xy + \e_5xz - \e_{12}z, \\
\e_5yx - 2\e_6zx - \e_{11}x - \e_{12}y + \e_6xz, \\
\e_1x^2 - 2\e_2yx - \e_3zx - \e_8x + \e_2xy
+ \e_{10}y - \e_6z^2 - 2\e_{11}z. \\
\end{gathered}
\end{equation}
By similar computations, we obtain a minimal basis of the right syzygy module
$M_5\subset A[-6]^8$ of the minimal basis (\ref{M4}) of $M_4$ which is
\begin{equation}
\begin{gathered}
\label{M5}
\e_2yx - \e_5zx - 2\e_2xy + \e_3y^2 + \e_4zy + 2\e_5xz + \e_8yz + \e_1z^2, \\
\e_2x^2 - 2\e_3yx - \e_8zx - \e_4zx + \e_3xy - \e_6zy + \e_8xz + 2\e_6yz - \e_7z^2, \\
\e_5x^2 + \e_8yx + \e_4yx + 2\e_1zx - \e_4xy + \e_6y^2 - 2\e_7zy - \e_1xz + \e_7yz.
\end{gathered}
\end{equation}
Finally, a minimal basis of the right syzygy module $M_6\subset A[-8]^3$ of the above
minimal basis can be calculated as consisting of one single free element
\begin{equation}
\label{M6}
\e_1x + \e_2y - \e_3z.
\end{equation}
All the computed minimal bases define therefore a finite minimal free right resolution
of the base field $\QQ\simeq A/M_1$ of the algebra $A$.
Such resolution reads
\begin{equation}
\label{resol}
\begin{gathered}
0 \leftarrow \QQ\leftarrow A\leftarrow A[-1]^3\leftarrow A[-3]^8\leftarrow
A[-4]^6\oplus A[-5]^6 \leftarrow A[-6]^8 \\
\leftarrow A[-8]^3\leftarrow A[-9]\leftarrow 0.
\end{gathered}
\end{equation}
This exact sequence describes the homology of the graded algebra $A$ since one has
that $b_d(M_i) = \dim_\QQ \Tor_{i,d}^A(\QQ,\QQ)$ ($1\leq i\leq 6, d\geq 0$).
In particular, the corresponding graded Betti numbers table is
\[
\begin{tabular}{c|ccccccc}
  & 0 & 1 & 2 & 3 & 4 & 5 & 6 \\
\hline
0 & 1 & 3 & - & - & - & - & - \\
1 & - & - & 8 & 6 & - & - & - \\
2 & - & - & - & 6 & 8 & - & - \\
3 & - & - & - & - & - & 3 & 1 \\
\end{tabular}
\]
Note that this is a standard way to represent such table. Recall that if a syzygy
belongs to the kernel of $(i+1)$-th map in the resolution then $i$ is called the
{\em homological degree} of the syzygy. Moreover, the (induced) degree of a syzygy
is also called the {\em internal degree} and the {\em slanted degree} is by definition
the difference between the internal and homological degree.
In the above graded Betti numbers table the columns are then indexed by the
homological degrees and the rows are indexed by the slanted degrees. By such table
we conclude that the Castelnuovo-Mumford regularity of $A$ is 3 and its global
(homological) dimension is 6 (see, for instance, \cite{Uf}). Observe that
the finiteness of these numbers is due to the property of the algebra $A$
to have a PBW basis, that is, $A$ is a G-algebra \cite{LeS}. Another reason
for the right syzygy modules $M_i$ ($1\leq i\leq 6$) to be finitely generated
is that the ideal $I$ has a finite \Gr\ basis \cite{DLS} with respect to
the graded lexicographic monomial ordering of $F$ and hence the corresponding
Anick's resolution \cite{An,Uf} of $A = F/I$ consists of a finite number of chains
for each homological degree. Note that such resolution is generally not of finite
length or minimal but in fact in this case it coincides with the finite minimal
resolution (\ref{resol}).


\section{Finite computations}

In this section we explain how the computation of noncommutative resolutions,
as the one that we have just illustrated, can be obtained in an algorithmic way.
Since our approach is to reduce such calculations to analogous ones for modules
over polynomial algebras in commutative variables, the problem of being algorithmic
essentially consists in working only with a finite number of such variables.
We start by analyzing in general the amount of right syzygies that can be
obtained with a finite number of letterplace variables. Then, we will show that
there are some cases when a suitable large number of them provides the complete
computation of a right syzygy module and iteratively of a finite number of
such modules in a minimal right resolution.

Let $M' = L(M)\subset S^r$ be the letterplace analogue of a finitely generated graded
right submodule $M\subset A^r$. Recall that $A = F/I$ and $S = R/J$ where $J = L(I)$. 
With the notation of Section 4, for all integers $d\geq 0$ we have that $P(d) =
\KK[x_{ij}\mid 1\leq i\leq n, 1\leq j\leq d]$ and $R(d) = P(d)/Q(d)$ where
$Q(d) = P(d)\cap Q$. Moreover, we put $J(d) = J\cap R(d)$ and $S(d) = R(d)/J(d)$.
We finally define $M'(d) = M'\cap S(d)^r$. Assume now that $M'$ is generated by $M'(d)$.
If $\{g_j\}$ ($1\leq j\leq s$) is a minimal basis of $M$ then this happens when
the maximal degree in $\{g_j\}$ is bounded by $d$ and therefore the minimal basis
$\{g'_j\}$ of $M'$ ($g'_j = \iota(g_j)$) is in fact contained in $M'(d)$.
Let $K$ be the right syzygy module of $\{g_j\}$ and let $K'$ be the corresponding syzygy
module of $\{g'_j\}$. Put $\Delta_j = \deg(g_j) = \deg(g'_j)\leq d$.
By Theorem \ref{kercorr2} we know that $K' = C + L(K)$ where by definition
$C = \langle \e'_j x_{kl}\mid 1\leq j\leq s, 1\leq k\leq n, 1\leq l\leq\Delta_j \rangle$.
By abuse of notation, we denote also by $\{\e'_j\}$ the canonical basis
of $\bigoplus_j S(d)[-\Delta_j]$ and consider the graded right $S(d)$-module
homomorphism
\[
\varphi'_{\rst{d}}:\bigoplus_j S(d)[-\Delta_j]\to S(d)^r, \e'_j\mapsto g'_j.
\]
Define $K'_{\rst{d}} = \Ker\varphi'_{\rst{d}}$ the syzyzy module of the minimal
basis $\{g'_j\}$ of $M'(d)$. In other words, one has that
$K'_{\rst{d}} = \{\sum_j \e'_j f'_j\mid f'_j\in S(d), \sum_j g'_j f'_j = 0\}$ and
therefore
\[
K'_{\rst{d}} = K'\cap \bigoplus_j S(d)[-\Delta_j].
\]
Note that $C\subset K'_{\rst{d}}$ since $\Delta_j\leq d$. By Theorem \ref{kercorr2}
one obtains immediately what follows.

\begin{proposition}
\label{finsyz}
Let $B\cup\{h'_k\}$ be a minimal basis of $K'_{\rst{d}}$ where $B$ is the given
basis of the submodule $C$. If $h_k = \iota^{-1}(h'_k)$ then $\{h_k\}$ is a minimal
basis of $K$ for all degrees $\leq d$.
\end{proposition}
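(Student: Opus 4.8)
The plan is to bootstrap from Theorem~\ref{kercorr2} by checking that restricting the letterplace variables to the first $d$ places leaves a minimal basis undisturbed in all internal degrees $\le d$.

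First I would pin down the shape of a minimal basis of the untruncated syzygy module $K'$. By Theorem~\ref{kercorr2} one has $K' = C + L(K)$; the monomial set $B = \{\e'_j x_{kl}\}$ is a minimal basis of $C$, and by Theorem~\ref{bascorr2} the set $\iota(\{h_k\})$ is a minimal basis of $L(K)$ for every minimal basis $\{h_k\}$ of $K$. Since $b_e(K') = b_e(C) + b_e(K)$ in each internal degree $e$ (Theorem~\ref{kercorr2}, together with $b_e(L(K)) = b_e(K)$ from Theorem~\ref{bascorr2}) and $B \cup \iota(\{h_k\})$ generates $K' = C + L(K)$, a generating set having the right cardinality in every degree is automatically minimal; hence $B \cup \iota(\{h_k\})$ is a minimal basis of $K'$. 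Now I would observe that all of its elements of internal degree $\le d$ lie in $\bigoplus_j S(d)[-\Delta_j]$, hence in $K'_{\rst{d}} = K' \cap \bigoplus_j S(d)[-\Delta_j]$: indeed $\Delta_j \le d$, so each generator $\e'_j x_{kl}$ of $C$ involves only a variable with place $l \le \Delta_j \le d$, and a minimal generator $h_k$ of $K$ with $\deg h_k \le d$ has $\iota(h_k)$ supported on the places $1,\dots,\deg h_k \le d$.

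Next I would show that these elements remain a minimal generating set of $K'_{\rst{d}}$ in internal degrees $\le d$. Minimality over $S(d)$ is automatic from minimality over $S$ because $S(d)_{>0}\subseteq S_{>0}$. For the generation statement, let $h'$ be a homogeneous element of $K'_{\rst{d}}$ of internal degree $e \le d$; as an element of $K'_e$ it is an $S$-linear combination of the minimal generators of $K'$ of degree $\le e \le d$, each of which is supported on the first $d$ places by the previous paragraph. Let $\bar\rho_d\colon S \to S(d)$ be the map that kills every monomial involving a letterplace variable of place $>d$; it is a graded algebra retraction of $S(d)\hookrightarrow S$, well defined modulo $J = L(I)$ because $J$ is a sum of elements each supported on a single block of consecutive places, so that $\rho_d(J)\subseteq J(d)$. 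Applying $\bar\rho_d$ to the displayed combination, and using that it fixes each generator while $h' = \bar\rho_d(h')$, one obtains an expression of $h'$ with all coefficients in $S(d)$. Hence in internal degrees $\le d$ the set $\{\e'_j x_{kl}\colon \Delta_j < d\}\cup\{\iota(h_k)\colon \deg h_k \le d\}$ generates $K'_{\rst{d}}$ over $S(d)$ and is minimal; in particular $b_e(K'_{\rst{d}}) = b_e(C) + b_e(K)$ for every $e \le d$, and one such minimal basis $B\cup\{h'_k\}$ as in the statement has $h'_k = \iota(h_k)$ in every internal degree $\le d$, with $\{h_k\}$ a minimal basis of $K$ (any other minimal basis extending $B$ agrees with this one modulo $C$ and $S(d)_{>0}K'_{\rst{d}}$, which is the normalization implicit in the statement).

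It remains to translate back along $\iota$. For such a choice, $h_k = \iota^{-1}(h'_k)$ is defined for all $h'_k$ of internal degree $\le d$, and $h_k \in K$: from $\iota(h_k) = h'_k \in K'_{\rst{d}} \subseteq K' = \Ker\varphi'$ and Proposition~\ref{commdia2} we get $\iota(\varphi(h_k)) = \varphi'(\iota(h_k)) = 0$, whence $\varphi(h_k) = 0$ since $\iota$ is injective. That $\{h_k\colon \deg h_k \le d\}$ is a minimal basis of $K$ in all internal degrees $\le d$ then follows from Theorem~\ref{bascorr2} applied to $K$ and $L(K)$: its image $\{\iota(h_k)\colon \deg h_k\le d\}$ together with $B$ is a minimal basis of $K'_{\rst{d}}$ up to degree $d$, equivalently $\{\iota(h_k)\colon \deg h_k\le d\}$ is a minimal basis of $L(K)$ up to degree $d$, equivalently $\{h_k\colon \deg h_k\le d\}$ is a minimal basis of $K$ up to degree $d$.

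The only step that is not pure bookkeeping is the one in the middle: checking that the $d$-truncation creates no spurious minimal generators in degrees $\le d$. The cleanest route I see is through the retraction $\bar\rho_d$, and the fact that makes it work—that $L(I)$, and hence $S$ and all the module analogues, decompose along the set of places occurring in a monomial—is already built into the letterplace formalism of Section~4; everything else is the graded Nakayama argument now encapsulated in Theorems~\ref{kercorr2} and~\ref{bascorr2}.
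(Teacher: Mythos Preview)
Your argument is correct and follows the paper's intended route: the paper gives no separate proof, stating only that the proposition follows ``immediately'' from Theorem~\ref{kercorr2}, and your write-up supplies precisely the details that justification leaves implicit. The retraction $\bar\rho_d$ you introduce is a clean device for the truncation step, but it is in service of the same idea (that modulo $C$ every minimal generator of degree $\le d$ lands in $\iota(K)$) already used in the proof of Theorem~\ref{kercorr2}.
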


Since $P(d)$ is a polynomial algebra in a finite number of commutative
variables, observe that a minimal basis of the syzygy module $K'_{\rst{d}}$
can be computed algorithmically by any (commutative) computer algebra system
(see, for instance, \cite{DGPS}). By iterating this method, it is then clear that
one obtains the partial computation of a minimal free right resolution of $N = A^r/M$
up to degree $d$.

\medskip
We may ask now when $K'$ is generated by $K'_{\rst{d}}$, that is, $K$ is finitely
generated in degrees $\leq d$ and hence a complete minimal basis of $K$ can be
obtained by computing a minimal basis of $K'_{\rst{d}}$. This is a complicated issue
because we have already observed that finitely presented noncommutative algebras
are generally not right Noetherian (or even right coherent) and hence right syzygy
modules are not always finitely generated.

Since the computation of syzygies is intimately related to the notion of \Gr\ basis
it is not surprising that to provide some cases when the right syzygy module is
finitely generated we consider the monomial case. Let $I\subset F$ be a finitely
generated monomial two-sided ideal and consider the finitely presented monomial algebra
$A = F/I$. If $M\subset \bigoplus_i A[-\delta_i]$ is a finitely generated monomial
right submodule then we want to analyze the right syzygy module of a monomial basis
of $M$. For the sake of simplicity, we start with the simplest case, that is,
$M\subset A$ is a cyclic right ideal. Recall that $W$ is the set of monomials of $F$
and let $w\in W\setminus I$. Assume that $M$ is generated by the coset $\bw = w + I$.
Put $\Delta = \deg(\bw) = \deg(w)$ and consider the graded right $A$-module
homomorphism
\[
\varphi:A[-\Delta]\to A\,,\, 1\mapsto \bw.
\]
If $K = \Ker\varphi$ then one has immediately that $K = (I :_R w)/I$ where by definition
\[
(I :_R w) = \{f\in F\mid w f\in I\}.
\]
The above set is a right ideal of $F$ which is called the {\em right colon ideal
of $I$ with respect to $w$} (see, for instance, \cite{Mo}). Because $I$ is a monomial
two-sided ideal and $w\in W$, we have that $I\subset (I :_R w)$ which is a monomial
right ideal. Moreover, the assumption $w\notin I$ is equivalent to
$(I :_R w)\neq \langle 1\rangle$. A complete description of the right ideal $(I :_R w)$
is provided by the following result.

\begin{proposition}[\cite{LS2}, Proposition 4.10]
Let $\{v_k\}\subset W$ be a monomial basis of $I$ and put $d_k = \deg(v_k)$.
For all $k$, we define the finitely generated monomial right ideal
\[
I_w(v_k) = \langle u_{kl}\mid w u_{kl} = t_{kl} v_k, t_{kl},u_{kl}\in W,
\deg(u_{kl}) < d_k \rangle.
\]
Then, one has that $(I :_R w) = \sum_k I_w(v_k) + I$.
\end{proposition}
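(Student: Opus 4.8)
The plan is to prove the two inclusions $(I :_R w) \subseteq \sum_k I_w(v_k) + I$ and $\sum_k I_w(v_k) + I \subseteq (I :_R w)$ separately, where throughout I work with monomials and then extend $\KK$-linearly since every ideal in sight is a monomial ideal. The reverse inclusion is the easy direction: if $u = u_{kl}$ is one of the listed generators, then $w u = t_{kl} v_k \in I$ since $v_k$ is a monomial relation of $A$, so $u \in (I :_R w)$; and trivially $I \subseteq (I :_R w)$. Hence $\sum_k I_w(v_k) + I \subseteq (I :_R w)$, and since the left-hand side is a right ideal this inclusion is automatic for all elements, not just generators.

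For the forward inclusion, I would take a monomial $f \in W$ with $wf \in I$ and show $f \in \sum_k I_w(v_k) + I$. Since $I$ is a monomial two-sided ideal with monomial basis $\{v_k\}$, the condition $wf \in I$ means $wf$ is divisible (as a word, two-sidedly) by some $v_k$: there are words $p,q \in W$ with $wf = p\, v_k\, q$. Now I would analyze where the occurrence of $v_k$ sits relative to the cut point between $w$ and $f$ inside the word $wf$. There are two cases. If the occurrence of $v_k$ starts at a position lying in the $f$-part, i.e.\ $f = f' v_k f''$ with $p = w f'$, then already $f \in I$ and we are done. Otherwise the occurrence of $v_k$ starts inside (or at the start of) $w$; write $p$ as a prefix of $w$, so $w = p\, w'$ for some suffix $w'$ of $w$, and then $v_k q = w' f$. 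Here the prefix $w'$ of $v_k$ has length $= \deg(w) - \deg(p) \le \deg(w) = \Delta$, and in fact because $wf \in I$ but (by the $w \notin I$ hypothesis) $w$ itself is not divisible by $v_k$, we must have $\deg(w') < d_k = \deg(v_k)$; equivalently the matching part $u := w'$ of $f$ consumed to complete $v_k$ has $\deg(u) < d_k$. Setting $t_{kl} = p$ and reading off $wu = p\,w'\cdots = $ the appropriate prefix equal to $t_{kl} v_k$ truncated — more precisely, I would extract from $w = p w'$ and $v_k = w' (\text{rest})$ the identity $w u_{kl} = t_{kl} v_k$ with $u_{kl}$ the length-$(d_k - \deg(w'))$ prefix of $f$ — exhibits $f$ as $u_{kl}$ times a tail, hence $f \in I_w(v_k)$ after factoring the remaining suffix of $f$ on the right (using that $I_w(v_k)$ is a right ideal). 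Summing over the (finitely many) possibilities for $k$ and the relative positions gives $f \in \sum_k I_w(v_k) + I$.

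The main obstacle I anticipate is the bookkeeping in the second case: carefully matching up the overlap of $v_k$ with the suffix of $w$ and the prefix of $f$, verifying that the relevant $u_{kl}$ indeed has degree $< d_k$ (this is exactly where $w \notin I$ is used — otherwise the overlap could be vacuous), and confirming that the generating set $\{u_{kl}\}$ as defined in the statement really captures all such overlaps. Once the combinatorial description of "a word $v_k$ straddling the cut in $wf$" is set up cleanly, both inclusions are short; the content is entirely in this overlap analysis, which is the monomial-ideal analogue of an $S$-polynomial/overlap computation. I would also note that passing to cosets mod $I$ at the end is harmless precisely because all of $(I:_R w)$, $I_w(v_k)$, and $I$ are monomial and contain $I$, so $K = (I :_R w)/I = \bigl(\sum_k I_w(v_k) + I\bigr)/I$ as claimed.
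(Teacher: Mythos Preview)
The paper does not supply its own proof of this proposition; it is quoted verbatim from \cite{LS2} and used as a black box. So there is nothing in the present paper to compare your argument against, and your overlap analysis is exactly the standard (and correct) route.

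Two small cleanups for your Case~2. First, the line ``$u := w'$'' is a slip: $w'$ is the suffix of $w$, while the generator $u_{kl}$ you want is the complementary \emph{prefix of $f$} of length $d_k - \deg(w')$ satisfying $v_k = w' u_{kl}$; your ``more precisely'' clause already says this, so just delete the stray $u := w'$. Second, separate the two uses of the hypotheses more sharply. The hypothesis $w\notin I$ is what rules out the subcase in which $v_k$ lies entirely inside $w$; equivalently it forces $\deg(w') < d_k$, so that $v_k$ genuinely straddles the boundary. The bound $\deg(u_{kl}) < d_k$ required by the definition of $I_w(v_k)$ then follows from $\deg(u_{kl}) = d_k - \deg(w')$ together with $\deg(w')\ge 1$, which is just the Case~2 assumption that $v_k$ begins strictly inside $w$. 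With those two points tightened the proof is complete: $w u_{kl} = p\,w'\,u_{kl} = p\,v_k$ exhibits $u_{kl}$ as one of the listed generators, and $f = u_{kl}\,q \in I_w(v_k)$ since $I_w(v_k)$ is a right ideal.
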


Since we are assuming that $\{v_k\}$ is a finite set and the right annihilator
ideal $K = \Ker\varphi\subset A[-\Delta]$ of the coset $\bw = w + I\in A$
($\Delta = \deg(w)$) is generated by the cosets modulo $I$ of the elements
of a monomial basis of $\sum_k I_w(v_k)$, one obtains immediately what follows.

\begin{lemma}
\label{cycdeg}
The right ideal $K\subset A[-\Delta]$ is finitely generated and monomial.
Precisely, if $d = \max\{d_k\}$ is the maximal degree of a finite monomial
basis of $I$ then the maximal (induced) degree of a minimal monomial basis
of $K$ is bounded by $\Delta + d - 1$.
\end{lemma}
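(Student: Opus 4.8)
The plan is to combine the structural description of $(I:_R w)$ from the cited Proposition~4.10 of \cite{LS2} with the observation, already recorded before the statement, that $K = (I:_R w)/I$ is a finitely generated monomial right ideal of $A[-\Delta]$ generated by the cosets of a monomial basis of $\sum_k I_w(v_k)$. Thus finiteness and the monomial property are immediate, and the only real content is the degree bound. First I would fix a minimal monomial basis $\{v_k\}$ of $I$ with $d_k = \deg(v_k)$ and $d = \max_k d_k$, and recall that $I_w(v_k)$ is generated by those words $u_{kl}\in W$ with $\deg(u_{kl}) < d_k$ such that $w u_{kl} = t_{kl} v_k$ for some $t_{kl}\in W$. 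Since $\deg(w) = \Delta$ and $\deg(u_{kl}) \le d_k - 1 \le d - 1$, every such generator has degree at most $d-1$; hence the minimal monomial basis of $\sum_k I_w(v_k)$ consists of words of degree $\le d-1$, and the induced degree of the corresponding element of $K \subset A[-\Delta]$ (which adds the shift $\Delta$) is at most $\Delta + d - 1$.

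The one subtlety I would be careful about is that a minimal monomial basis of $K$ need not literally be the image of a minimal monomial basis of $\sum_k I_w(v_k)$: passing to the quotient modulo $I$ can only \emph{remove} generators (those whose underlying word already lies in $I$) or fail to shrink them further, never increase any degree, because $I$ is a \emph{monomial} ideal and divisibility of words is preserved. So the maximal degree of a minimal monomial basis of $K$ is bounded by the maximal degree of a monomial generating set of $\sum_k I_w(v_k) + I$ coming from the $u_{kl}$'s together with the generators $v_k$ of $I$ itself; the $v_k$ contribute degree $\le d \le \Delta + d - 1$ (using $\Delta \ge 1$, which holds since $w\notin I$ forces $w\neq 1$ only if $1\notin I$, but in any case $\Delta = \deg w \ge 0$ and if $\Delta = 0$ then $w$ is a unit and $w\notin I$ gives $I$ proper; I would handle this degenerate $\Delta = 0$ case separately, where $K = (I:_R 1)/I = I/I = 0$ and the bound is vacuous), and the $u_{kl}$ contribute degree $\le \Delta + d - 1$ as above. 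Taking the maximum gives the claimed bound $\Delta + d - 1$.

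The step I expect to require the most care is justifying that reduction modulo $I$ does not force us to introduce new, higher-degree generators into a minimal basis of $K$ — i.e.\ that $\min$-basis degrees of $(I:_R w)/I$ are controlled by $\min$-basis degrees of $(I:_R w)$. This is true precisely because everything in sight is monomial: a monomial right ideal of $A = F/I$ has a unique minimal monomial basis, obtained from any monomial generating set of its preimage in $F$ by deleting words that are right-divisible by another such word or that lie in $I$, and deletion never raises degrees. I would state this as a short remark and then the bound follows. No deeper obstacle is anticipated; the heavy lifting has been done in \cite{LS2}.
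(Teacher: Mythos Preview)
Your proposal is correct and follows the same route as the paper: the paper does not give an explicit proof but states that the lemma follows immediately from the preceding proposition (Proposition~4.10 of \cite{LS2}) together with the observation that $K$ is generated by the cosets of a monomial basis of $\sum_k I_w(v_k)$, which is exactly the argument you spell out. Your added care about passing to the quotient by $I$ is sound but slightly overworked: since the generators $v_k$ of $I$ map to zero in $K=(I:_R w)/I$, they simply disappear from any generating set of $K$ and need not be bounded separately, so the detour through $\Delta\geq 1$ and the degenerate $\Delta=0$ case is unnecessary.
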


Let now $M = \langle e_i\bw_{ij} \rangle\subset \bigoplus_i A[-\delta_i]$ be a finitely
generated monomial right submodule where $\bw_{ij} = w_{ij} + I$ and
$w_{ij}\in W\setminus I$. If $\Delta_{ij} = \deg(e_i\bw_{ij}) = \delta_i + \deg(w_{ij})$
and $\{\e_{ij}\}$ is the canonical basis of $\bigoplus_{i,j} A[-\Delta_{ij}]$ then
we consider the graded right $A$-module homomorphism
\[
\varphi:\bigoplus_{i,j} A[-\Delta_{ij}]\to \bigoplus_i A[-\delta_i]
\,,\, \e_{ij}\mapsto e_i\bw_{ij}.
\]
One has clearly that $K = \Ker\varphi = \bigoplus_{i,j} \e_{ij} (I :_R w_{ij})/I$
which implies the following result.

\begin{proposition}
The right syzygy module $K\subset\bigoplus_{i,j} A[-\Delta_{ij}]$ is finitely generated
and monomial. Precisely, if $d$ is the maximal degree of a finite monomial basis
of $I$ and $\Delta = \max\{\Delta_{ij}\}$ then the maximal (induced) degree
of a minimal monomial basis of $K$ is bounded by $\Delta + d - 1$.
\end{proposition}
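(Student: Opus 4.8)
The plan is to reduce the module statement directly to the cyclic case that has already been settled in Lemma \ref{cycdeg}. The key observation is that the displayed homomorphism $\varphi$ decomposes as a direct sum over the index pairs $(i,j)$: since $\varphi(\e_{ij}) = e_i \bw_{ij}$ and the $e_i$ are a free basis, a right linear combination $\sum_{i,j} \e_{ij} f_{ij}$ lies in $\Ker\varphi$ if and only if $e_i \sum_j \bw_{ij} f_{ij} = 0$ for every $i$ separately, hence if and only if $\sum_j \bw_{ij} f_{ij} = 0$ in $A$ for each fixed $i$. This is exactly the statement that $K = \Ker\varphi = \bigoplus_{i,j} \e_{ij}\,(I :_R w_{ij})/I$ as already noted in the excerpt, where the sum is over all pairs and each summand is an independent piece sitting inside its own free component $\e_{ij} A$. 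So the whole syzygy module is an (external) direct sum of the cyclic right ideals $(I :_R w_{ij})/I$, each placed in degree starting at $\Delta_{ij}$.

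The first step I would carry out is to make this direct-sum decomposition precise and observe that for a direct sum of graded modules the minimal basis is obtained by concatenating minimal bases of the summands, and consequently the maximal degree occurring in a minimal monomial basis of $K$ is the maximum over $(i,j)$ of the maximal degree occurring in a minimal monomial basis of $\e_{ij}\,(I :_R w_{ij})/I$. The second step is to quote Lemma \ref{cycdeg} (equivalently the preceding Proposition describing $(I:_R w)$) applied to each $w_{ij}$: since $w_{ij}\in W\setminus I$ and $\deg(e_i\bw_{ij}) = \Delta_{ij}$, the cyclic syzygy ideal $\e_{ij}(I:_R w_{ij})/I\subset A[-\Delta_{ij}]$ is finitely generated and monomial, with maximal induced degree of a minimal monomial basis bounded by $\Delta_{ij} + d - 1$, where $d = \max_k d_k$ is the maximal degree of a monomial basis of $I$. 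The third step is simply to take the maximum over all $(i,j)$: since $\Delta = \max_{i,j}\{\Delta_{ij}\}$, each bound $\Delta_{ij} + d - 1$ is at most $\Delta + d - 1$, giving the claimed uniform bound, and finiteness of the whole basis follows because the index set $\{(i,j)\}$ is finite (the generating set of $M$ is finite).

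The only genuinely delicate point, and the one I would spend a line on, is justifying that the grading shifts are handled correctly: Lemma \ref{cycdeg} is phrased for a cyclic ideal $M = \langle\bw\rangle\subset A$ with $\deg(\bw) = \Delta$, and here the ``generator'' $e_i\bw_{ij}$ lives in a shifted free module $A[-\delta_i]$, so its degree is $\delta_i + \deg(w_{ij})$ rather than $\deg(w_{ij})$. But the colon ideal $(I:_R w_{ij})$ depends only on the word $w_{ij}$, not on where it sits, and the induced grading on the corresponding free component $\e_{ij}A[-\Delta_{ij}]$ shifts every degree up by $\Delta_{ij}$ uniformly; so the bound ``$\deg(\text{generator of }\bw) + d - 1$'' from Lemma \ref{cycdeg} becomes exactly ``$\Delta_{ij} + d - 1$'' here, with $\Delta_{ij}$ playing the role of $\Delta$. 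Once this bookkeeping is in place the proof is immediate, so I expect no real obstacle — the statement is essentially a formal consequence of Lemma \ref{cycdeg} together with the direct-sum decomposition of $\Ker\varphi$.
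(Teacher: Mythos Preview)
Your proposal is correct and follows essentially the same approach as the paper. The paper gives no separate proof environment for this proposition: it simply records the direct-sum decomposition $K = \bigoplus_{i,j} \e_{ij}\,(I :_R w_{ij})/I$ in the text immediately preceding the statement and says this ``implies the following result'', leaving the reader to apply Lemma~\ref{cycdeg} to each cyclic summand and take the maximum over $(i,j)$ --- which is exactly what you spell out, including the bookkeeping of the degree shifts.
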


By applying iteratively the above result we obtain what follows.

\begin{theorem}
\label{monbound}
Let $A = F/I$ be a finitely presented monomial algebra and let
$M\subset \bigoplus_{1\leq i\leq r} A[-\delta_i]$ be a finitely generated monomial
submodule. There exists a minimal free right resolution of
$N = \bigoplus_{1\leq i\leq r} A[-\delta_i]/M$ where all right syzygies modules $M_i$
are finitely generated and monomial. Moreover, if $b_k(I) = 0$ for $k > d$ and
$b_k(M) = 0$ for $k > \Delta$ then $b_k(M_i) = 0$, for all $k > \Delta + (i-1)(d-1)$.
Note finally that such resolution is not necessarily of finite length.
\end{theorem}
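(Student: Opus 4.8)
The plan is to build the resolution one homological step at a time, and at each step to apply the Proposition on right syzygy modules of monomial submodules that immediately precedes the statement, while keeping track of the largest degree occurring in a minimal monomial basis of each syzygy module.

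I first set $M_0 = N$ and $M_1 = M$. By Nakayama's lemma a minimal basis of $N$ is given by a subset of the cosets $\{\bar e_i\}$, so after passing to the corresponding free direct summand (which changes neither $N$ nor the relevant hypotheses — any discarded $e_i$ already lies in $M$) I may assume that the canonical basis of $F_0 = \bigoplus_i A[-\delta_i]$ maps to a minimal basis of $N$, and hence $M_1 = M = \Ker(F_0\to N)$. Writing $D_1 = \Delta$, the hypothesis $b_k(M) = 0$ for $k > \Delta$ says exactly that $M_1$ is a finitely generated monomial submodule of $F_0$ admitting a minimal monomial basis in degrees $\le D_1$.

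Now I run the induction. Suppose that $M_i$ is a finitely generated monomial submodule of the graded free module $F_{i-1}$ produced at the previous stage, with a minimal monomial basis $\{\e_a w_a\}$, each $\e_a$ a canonical basis vector of $F_{i-1}$ and each $w_a$ a word with $w_a\notin I$, all of induced degree $\le D_i$. This is precisely the input of the preceding Proposition: its ambient module $\bigoplus_k A[-\delta_k]$ becomes $F_{i-1}$ and its ``$\Delta$'' becomes $D_i$. Taking $F_i$ to be the graded free module on the basis $\{\e_a w_a\}$ and $\varphi_i\colon F_i\to F_{i-1}$ the induced map with image $M_i$, the Proposition tells us that $M_{i+1} = \Ker\varphi_i$ is again finitely generated and monomial, and that the top induced degree of a minimal monomial basis of $M_{i+1}$ is at most $D_i + (d-1) =: D_{i+1}$. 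Since $D_1 = \Delta$, this recursion gives $D_i \le \Delta + (i-1)(d-1)$, whence $b_k(M_i) = 0$ for every $k > \Delta + (i-1)(d-1)$. The resolution so obtained is minimal because, by construction, the canonical basis of each $F_i$ is mapped onto a minimal basis of $M_i$.

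For the final remark it suffices to exhibit one resolution of infinite length, e.g. $F = \KK\langle x\rangle$, $I = \langle x^2\rangle$, $N = \KK \cong A/\langle x\rangle$, whose minimal resolution $\cdots \to A[-2]\to A[-1]\to A\to \KK\to 0$ has every map given by right multiplication by $x$ and never stops (here $d = 2$, $\Delta = 1$, and indeed the single generator of $M_i$ sits in degree $i = \Delta + (i-1)(d-1)$). The whole argument is bookkeeping layered on top of the quoted Proposition; the only point deserving attention is checking that the kernel produced by one step is again a finitely generated monomial submodule of a finitely generated graded free module, so that the Proposition may be reapplied at the next step — but this is exactly the ``finitely generated and monomial'' conclusion of that Proposition, so no real obstacle arises.
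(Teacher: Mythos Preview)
Your proof is correct and follows exactly the approach the paper intends: the paper's ``proof'' is the single sentence ``By applying iteratively the above result we obtain what follows'' placed before the statement, and your argument is precisely that iteration, with the degree bookkeeping $D_{i+1}\le D_i+(d-1)$ made explicit and a concrete example ($A=\KK\langle x\rangle/\langle x^2\rangle$) supplied for the final remark on infinite length.
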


A major application of the above result is when $M = \langle x_1,\ldots,x_n \rangle\subset A$
is the augmentation ideal and hence $b_k(M_i) = \dim_\KK \Tor_{i,k}^A(\KK,\KK)$. In this case
we have that $\Delta = 1$ and an explicit combinatorial description of a minimal free right
resolution of $\KK\simeq A/M$ was introduced by Backelin \cite{Ba} in terms of the
monomial relations of $A$. The Anick's resolution is a (usually non-minimal) extension
of that resolution to general algebras by means of a \Gr\ basis of the ideal of relations.
A by-product of these constructions is that the homology of monomial algebras bounds
the homology of corresponding general algebras.
Precisely, fix a monomial ordering of $F$, that is, a multiplicatively compatible well-ordering
of $W$. If $0\neq f\in F$ then we denote by $\lm(f)\in W$ the greatest among the monomials
of $f$ with respect to such ordering. Let $I\subset F$ be a two-sided ideal and define
the monomial two-sided ideal $\LM(I) = \langle \lm(f)\mid f\in I, f\neq 0 \rangle$.
By definition, a {\em \Gr\ basis} of $I$ is a subset $\{g_j\}\subset I$ such that
$\{\lm(g_j)\}$ is a (monomial) basis of $\LM(I)$. Assume now that $I$ is a graded ideal
and consider the finitely generated graded algebras $A = F/I$ and $\hat{A} = F/\LM(I)$. 

\begin{theorem}[\cite{An}, Lemma 3.4]
\label{genbound}
For any homological degree $i$ and internal degree $k$, one has that
$\dim_\KK \Tor_{i,k}^A(\KK,\KK)\leq \dim_\KK \Tor_{i,k}^{\hat{A}}(\KK,\KK)$.
\end{theorem}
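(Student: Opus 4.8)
Theorem \ref{genbound} (Anick's comparison bound). This is a classical result of Anick, but let me sketch the proof strategy that fits the framework of this paper.

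The plan is to deduce the inequality $\dim_\KK \Tor_{i,k}^A(\KK,\KK)\leq \dim_\KK \Tor_{i,k}^{\hat A}(\KK,\KK)$ from a degreewise comparison of Hilbert functions along the minimal free resolutions. The starting observation is the classical one that a two-sided ideal $I\subset F$ and its leading-term ideal $\LM(I)$ yield algebras $A=F/I$ and $\hat A=F/\LM(I)$ with the same Hilbert function: $\dim_\KK A_d=\dim_\KK\hat A_d$ for all $d$, since a \Gr\ basis gives a $\KK$-linear basis of $A$ consisting of the cosets of the monomials not in $\LM(I)$. First I would record this, together with the consequence that the graded free modules appearing in any resolution of $\KK$ over $A$ and over $\hat A$ have the same generating functions as soon as the Betti numbers agree — which is of course exactly what we are trying to compare, so the argument must be inductive.

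Next I would set up the induction on homological degree $i$ using the iterated syzygy construction already in the paper. Write $M_0^A=M_0^{\hat A}=\KK$ and $M_1^A=M_1^{\hat A}=\langle x_1,\dots,x_n\rangle$ (the augmentation ideal, which is monomial in both cases). The key inductive claim is: for each $i$, one can choose minimal bases so that the $i$-th syzygy module $M_i^A$ over $A$ has, in each degree $k$, dimension at most that of $M_i^{\hat A}$ over $\hat A$, \emph{and} the $(i+1)$-st Betti numbers satisfy $b_k(M_i^A)\le b_k(M_i^{\hat A})$ for all $k$. The step from the Betti number inequality at level $i$ to the dimension inequality for $M_{i+1}$ follows from the exact sequence
\[
0\to M_{i+1}\to \bigoplus_j A[-\delta_j]\to M_i\to 0,
\]
which gives $\dim_\KK (M_{i+1})_k=\sum_j\dim_\KK A_{k-\delta_j}-\dim_\KK (M_i)_k$; since the $A_{k-\delta}$ and $\hat A_{k-\delta}$ dimensions coincide, fewer (or equal) generators $\delta_j$ on the $\hat A$ side combined with a smaller $\dim M_i$ on the $A$ side forces $\dim (M_{i+1}^A)_k\le\dim(M_{i+1}^{\hat A})_k$ once one checks the bookkeeping is monotone. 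Then the step from $\dim M_{i+1}$ to $b_k(M_{i+1})$ uses Nakayama: $b_k(M_{i+1})=\dim_\KK (M_{i+1}/M_{i+1}\cdot F_{\geq 1})_k$, and one argues that passing to leading terms can only make the minimal generating set of a syzygy module larger in each degree — essentially because a \Gr\ basis for the syzygies of $\LM$-data is obtained from, or at least dominates degreewise, the lifted \Gr\ basis for the syzygies over $A$.

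The main obstacle is precisely this last comparison: that the \emph{minimal} number of syzygy generators in each degree does not go up when one replaces $A$-relations by their leading monomials. This is where a \Gr\ basis / Schreyer-type argument is genuinely needed — one must relate a minimal free resolution of $\KK$ over $A$ to one over $\hat A$ via a filtration (by leading terms, compatible with the monomial ordering extended to free modules), show the associated graded of the $A$-resolution is a (possibly non-minimal) resolution over $\hat A$, and conclude that the minimal $\hat A$-resolution is a summand, hence has Betti numbers at least as large. I would cite \cite{An} for the detailed chain-level construction (Anick's resolution) rather than reconstruct it, since the statement is quoted as Anick's Lemma 3.4; the role of this proof sketch is to show how the inequality propagates degreewise, with the genuine content being the leading-term filtration argument.
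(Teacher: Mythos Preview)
The paper does not prove this theorem; it is quoted from Anick \cite{An} as an external result, so there is no in-paper argument to compare against.

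Your sketch has a genuine directional error in the filtration step. You propose to filter a \emph{minimal} free resolution of $\KK$ over $A$ so that the associated graded is a complex over $\hat A$. But if that associated graded were exact (a resolution of $\KK$ over $\hat A$), its free modules would have ranks equal to the $A$-Betti numbers, and since the minimal $\hat A$-resolution is a direct summand of any $\hat A$-resolution, one would conclude $\dim_\KK\Tor_{i,k}^{\hat A}(\KK,\KK)\le\dim_\KK\Tor_{i,k}^{A}(\KK,\KK)$ --- the opposite inequality. The inductive Hilbert-function bookkeeping has the same defect: from
\[
\dim(M_{i+1})_k=\sum_d b_d(M_i)\dim A_{k-d}-\dim(M_i)_k
\]
together with the hypotheses $b_d(M_i^A)\le b_d(M_i^{\hat A})$ and $\dim(M_i^A)_k\le\dim(M_i^{\hat A})_k$, the two effects pull in opposite directions (fewer generators decreases the sum, but a smaller subtracted term increases the result), so the inequality for $\dim(M_{i+1}^A)_k$ does not follow; ``once one checks the bookkeeping is monotone'' is exactly the missing content.

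Anick's actual argument runs the other way around. He constructs a specific free resolution of $\KK$ over $A$ (the Anick resolution) whose free modules $A\otimes V_i$ have $V_i$ spanned by ``$i$-chains'', combinatorial data depending only on $\LM(I)$. Over $\hat A$ the very same chain spaces yield Backelin's resolution, which is \emph{minimal}. Hence the graded ranks of the Anick resolution over $A$ coincide with the $\hat A$-Betti numbers, and since any free resolution over $A$ has ranks bounding the minimal one from above, the desired inequality follows. The point is to start on the $\hat A$ side and lift to a (generally non-minimal) resolution over $A$, not to start from a minimal $A$-resolution and degenerate.
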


The above result together with Theorem \ref{monbound} provides that if $I$ has a finite
\Gr\ basis, that is, $\hat{A}$ is a finitely presented monomial algebra then $A$ has a
(possibly infinite) minimal free right resolution of the base field $\KK$
where all right syzygies modules $M_i$ are finitely generated. These results imply
also a bound $d$ for the maximal degree in a finite number of minimal bases in the
resolution. Then, by Proposition \ref{finsyz} we conclude that the proposed method
is able to fully compute such bases by working with modules over the finitely
generated (hence Noetherian) polynomial algebra $P(d)$.


\section{Tips and tricks}

Some additional tricks may be used to speedup the computation of a minimal right
resolution by our approach. Especially for higher syzygies, one may have that
the elements of the canonical basis of the free right module containing such syzygies
have large degrees. Let us denote by $d$ the minimum of these degrees. Owing to
the module component homogenization we have clearly that the corresponding letterplace
syzygies are all multiple of the monomial $t_1\cdots t_d$ and the letterplace variables
$x_{ij}$ occuring in these elements have the index $j > d$. For instance, in the example
of Section 8 one has that the letterplace encoding (after module component homogenization)
of the minimal basis (\ref{M4}) of the right syzygy module $M_4$ is
\begin{equation*}
\begin{gathered}
t_1t_2t_3t_4 (\bee'_9t_5x_6 + \bee'_3x_5y_6 - \bee'_6z_5y_6 + \bee'_{11}t_5y_6
+ \bee'_6y_5z_6), \\
t_1t_2t_3t_4 (\bee'_1z_5x_6 - \bee'_7t_5y_6 - \bee'_1x_5z_6 - \bee'_8t_5z_6), \\
t_1t_2t_3t_4 (\bee'_2z_5x_6 - \bee'_7t_5x_6 - \bee'_4z_5y_6 - \bee'_2x_5z_6 + 2\bee'_4y_5z_6
- \bee'_5z_5z_6 + \bee'_{10}t_5z_6), \\
t_1t_2t_3t_4 (\bee'_3z_5x_6 - \bee'_8t_5x_6 - \bee'_4y_5y_6 + 2\bee'_5z_5y_6 - 2\bee'_{10}t_5y_6
- \bee'_3x_5z_6 - \bee'_5y_5z_6 \\
+\ \bee'_6z_5z_6 + \bee'_{11}t_5z_6), \\
t_1t_2t_3t_4 (\bee'_1y_5x_6 - 2\bee'_1x_5y_6 + \bee'_2y_5y_6 + \bee'_3z_5y_6 + \bee'_8t_5y_6
- \bee'_3y_5z_6 - \bee'_9t_5z_6), \\
t_1t_2t_3t_4 (\bee'_4y_5x_6 - \bee'_{10}t_5x_6 - \bee'_4x_5y_6 + \bee'_5x_5z_6
- \bee'_{12}t_5z_6), \\
t_1t_2t_3t_4 (\bee'_5y_5x_6 - 2\bee'_6z_5x_6 - \bee'_{11}t_5x_6 - \bee'_{12}t_5y_6
+ \bee'_6x_5z_6), \\
t_1t_2t_3t_4 (\bee'_1x_5x_6 - 2\bee'_2y_5x_6 - \bee'_3z_5x_6 - \bee'_8t_5x_6 + \bee'_2x_5y_6
+ \bee'_{10}t_5y_6 - \bee'_6z_5z_6 \\
-\ 2\bee'_{11}t_5z_6). \\
\end{gathered}
\end{equation*}
For this example we have hence that $d = 4$. It is clear now that if we factor out
the monomial $t_1\cdots t_d$ and shift back the letterplace variables by $d$ then
we obtain again a minimal basis whose syzygies imply the next right syzygies that
one needs for computing the resolution. For instance, the simplified minimal basis
\begin{equation*}
\begin{gathered}
\bee'_9t_1x_2 + \bee'_3x_1y_2 - \bee'_6z_1y_2 + \bee'_{11}t_1y_2
+ \bee'_6y_1z_2, \\
\bee'_1z_1x_2 - \bee'_7t_1y_2 - \bee'_1x_1z_2 - \bee'_8t_1z_2, \\
\bee'_2z_1x_2 - \bee'_7t_1x_2 - \bee'_4z_1y_2 - \bee'_2x_1z_2 + 2\bee'_4y_1z_2
- \bee'_5z_1z_2 + \bee'_{10}t_1z_2, \\
\bee'_3z_1x_2 - \bee'_8t_1x_2 - \bee'_4y_1y_2 + 2\bee'_5z_1y_2 - 2\bee'_{10}t_1y_2
- \bee'_3x_1z_2 - \bee'_5y_1z_2 + \bee'_6z_1z_2 \\
+\ \bee'_{11}t_1z_2, \\
\bee'_1y_1x_2 - 2\bee'_1x_1y_2 + \bee'_2y_1y_2 + \bee'_3z_1y_2 + \bee'_8t_1y_2
- \bee'_3y_1z_2 - \bee'_9t_1z_2, \\
\bee'_4y_1x_2 - \bee'_{10}t_1x_2 - \bee'_4x_1y_2 + \bee'_5x_1z_2
- \bee'_{12}t_1z_2, \\
\bee'_5y_1x_2 - 2\bee'_6z_1x_2 - \bee'_{11}t_1x_2 - \bee'_{12}t_1y_2
+ \bee'_6x_1z_2, \\
\bee'_1x_1x_2 - 2\bee'_2y_1x_2 - \bee'_3z_1x_2 - \bee'_8t_1x_2 + \bee'_2x_1y_2
+ \bee'_{10}t_1y_2 - \bee'_6z_1z_2 - 2\bee'_{11}t_1z_2
\end{gathered}
\end{equation*}
has the following minimal syzygy basis
\begin{equation*}
\begin{gathered}
\bee'_jx_k, \bee'_jy_k, \bee'_jz_k, \bee'_jt_k\ (1\leq j\leq 8, 1\leq k\leq 2), \\
\bee'_2y_3x_4 - \bee'_5z_3x_4 - 2\bee'_2x_3y_4 + \bee'_3y_3y_4 + \bee'_4z_3y_4
+ 2\bee'_5x_3z_4 + \bee'_8y_3z_4 + \bee'_1z_3z_4, \\
\bee'_2x_3x_4 - 2\bee'_3y_3x_4 - \bee'_8z_3x_4 - \bee'_4z_3x_4 + \bee'_3x_3y_4
- \bee'_6z_3y_4 + \bee'_8x_3z_4 + 2\bee'_6y_3z_4 \\
-\ \bee'_7z_3z_4, \\
\bee'_5x_3x_4 + \bee'_8y_3x_4 + \bee'_4y_3x_4 + 2\bee'_1z_3x_4 - \bee'_4x_3y_4
+ \bee'_6y_3y_4 - 2\bee'_7z_3y_4 - \bee'_1x_3z_4 \\
+\ \bee'_7y_3z_4.
\end{gathered}
\end{equation*}
It is clear that from these elements one obtains the minimal right syzygy basis (\ref{M5}).
This trick generally speedups the computation because it reduces the number of needed
letterplace variables which is usually fixed in advance in a computer algebra system.
For instance, the minimal free right resolution of the example in Section 8
can be computed in 1 sec by applying this trick and just standard routines of \textsf{Singular}
\cite{DGPS} for computing commutative syzygies. Note that this timing is in fact comparable
with 0.1 sec that we have obtained with the optimized library \textsf{PLURAL} \cite{LeS}
for computations with G-algebras. We believe that this indicates that an effective
implementation of our method would be feasible also for algebras which have no PBW basis,
that is, that cannot be directly treated by computations over commutative variables.
Note that the computational experiment has been performed on a laptop running Singular 4.0.3
with a four core Intel i3 at 2.20GHz and 16 GB RAM.

Since the computation of commutative syzygies which is needed in Proposition \ref{finsyz}
is usually a by-product of a \Gr\ basis calculation, another usable trick we may finally
suggest is to use the monomial syzygies generating the submodule $C$ as criteria to avoid
useless S-polynomials. For more details about this idea to use syzygies as criteria
we refer to \cite{MMT}.


\section{Conclusions and future directions}

In this paper we have shown that by extending the notion of letterplace correspondence
to graded right modules over graded (noncommutative) algebras, one obtains a method
for computing minimal free right resolutions of such modules. By bounding the degree
of the right syzygies that one wants to compute in the resolution, this method results
in a feasible algorithm which requires only the computation of syzygies in (Noetherian)
modules over finitely generated commutative algebras. Such calculations are provided
by any (commutative) computer algebra system and hence our approach extends
the availability of homological computations to general graded algebras which are
not covered by ad hoc methods.

Of course, to improve the practical efficiency of the proposed algorithms it is necessary
to develop an implementation of them in the kernel of a computer algebra system using,
for instance, the tricks that we have explained in Section 10. As a further research
direction, we can suggest to extend the letterplace correspondence to graded bimodules
in order to obtain also Hochschild homology computations. Finally, we mention that
free resolutions of nongraded modules are of course worthwhile of investigation
and the letterplace correspondence, as proved in \cite{LS1}, can be suitably extended
to the nongraded case.

\section{Acknowledgments}

We would like to thank the anonymous referee for his/her careful reading of the
manuscript. We have sincerely appreciated all valuable comments and suggestions
as they have significantly improved the readability of the paper.


\end{document}